\newtheorem{example}{Example}[section]
\newtheorem{algorithm}{Algorithm}[section]
\newcommand{\alglist}{
\begin{list}{Step 1}
{\setlength{\leftmargin}{0.6 in}\setlength{\labelwidth}{1.0 in}} }
\title{Tensor absolute value equations
}
\author{Shouqiang~Du\thanks{College of Mathematics and
statistic, Qingdao University, Qingdao 266071, P. R. China ({\tt
dsq8933@163.com}).} \and Liping~Zhang\thanks{Corresponding author.
Department of Mathematical Sciences, Tsinghua University, Beijing
100084, P. R. China ({\tt lzhang@math.tsinghua.edu.cn}).} \and
Chiyu~Chen\thanks{Department of Mathematical Sciences, Tsinghua
University, Beijing 100084, P. R. China ({\tt
ccy\_justin@163.com}).}
        \and Liqun~Qi\thanks{Department of Applied Mathematics, The Hong Kong Polytechnic University, Hung Hom, Kowloon,
Hong Kong, P. R. China ({\tt maqilq@polyu.edu.hk}).} }
\begin{document}

\maketitle

\begin{abstract}
This paper is concerned with solving some structured multi-linear
systems, which are called tensor absolute value equations. This kind
of absolute value equations is closely related to tensor
complementarity problems and is a generalization of the well-known
absolute value equations in the matrix case. We prove that tensor
absolute value equations are equivalent to some special structured
tensor complementary problems. Some sufficient conditions are given
to guarantee the existence of solutions for tensor absolute value
equations. We also propose a Levenberg-Marquardt-type algorithm for
solving some given tensor absolute value equations and preliminary
numerical results are reported to indicate the efficiency of the
proposed algorithm.
\end{abstract}

\begin{keywords}
M-tensors, absolute value equations, Levenberg-Marquardt method,
tensor complementarity problem
\end{keywords}

\begin{AMS}
15A48, 15A69, 65K05, 90C30, 90C20
\end{AMS}

\pagestyle{myheadings} \thispagestyle{plain} \markboth{S. DU, L.
ZHANG, C. CHEN, AND L. QI}{TENSOR ABSOLUTE VALUE EQUATIONS}

\section{Introduction}
The systems of multi-linear equations can be expressed by
tensor-vector products, just as we rewrite linear systems by
matrix-vector products. Let $\mathcal{A}$ be an $m$th-order tensor
in $R^n\times \cdots \times R^n$ and ${\bf b}$ be a vector in $R^n$.
Then a multi-linear equation can be expressed as
\begin{equation}\label{mequ}
\mathcal{A}{\bf x}^{m-1}={\bf b},
\end{equation}
where $\mathcal{A}{\bf x}^{m-1}$ is a vector in $R^n$ \cite{qi05}
with
$$
(\mathcal{A}{\bf x}^{m-1})_i=\sum_{i_2=1}^n\cdots
\sum_{i_m=1}^na_{ii_2\ldots i_m}x_{i_2}\cdots x_{i_m},\quad
i=1,\ldots,n.
$$
Solving multi-linear systems is always an important problem in
engineering and scientific computing \cite{dingwei,ling}. In this
paper, we consider the systems of multi-linear absolute value
equations, which can be expressed as
\begin{equation}\label{TAVE}
\mathcal{A}{\bf x}^{m-1}-|{\bf x}|^{[m-1]}={\bf b},
\end{equation}
where $|{\bf x}|^{[m-1]}$ is a vector in $R^n$ with
$$
|{\bf x}|^{[m-1]}=(|x_1|^{m-1},\ldots,|x_n|^{m-1})^T.
$$

It is easy to see that the system of multi-linear absolute value
equations (\ref{TAVE}) is a generalization of the well-known
absolute value equations $$A{\bf x}-|{\bf x}|={\bf b}$$ with a
matrix $A\in R^{n\times n}$. The absolute value equations (AVE) has
wide applications in applied science and technology such as
optimization physical and economic equilibrium problems
\cite{mangas1,mangas2,mangas3}. As was shown in \cite{mangas3}, the
general NP-hard linear complementarity problem \cite{LCP} which
subsumes many mathematical programming problems can be formulated as
an AVE. This implies that the AVE is NP-hard in its general form.
Analogous to AVE, we call (\ref{TAVE}) tensor absolute value
equations (TAVE). Obviously, the TAVE is also NP-hard. Thus,
investigating the existence of solutions for the TAVE is a
significant problem.

Recently, Song and Qi \cite{songqi} introduced a class of
complementarity problems, called tensor complementarity problems,
where the involved function is defined by some homogenous polynomial
of degree $n$ with $n > 2$. It is known that the tensor
complementarity problem is a generalization of the linear
complementarity problem \cite{LCP}; and a subclass of nonlinear
complementarity problems \cite{NCP}. The tensor complementarity
problem was studied recently by many scholars
\cite{cheqiwei,songqi2}. In \cite{mangas3}, it was shown that the
AVE is equivalent to a generalized linear complementarity problem.
Can we show that the TAVE is equivalent to a generalized tensor
complementarity problem? Although some computational methods have
been presented for the AVE, it is very difficult to extend these
algorithms to solve the TAVE because the TAVE (\ref{TAVE}) is a
nonlinear equation. The Levenberg-Marquardt method is one of the
important algorithms for solving nonlinear equations \cite{FK}. Can
we propose an efficient algorithm such as the Levenberg-Marquardt
method for solving the TAVE (\ref{TAVE})? To our best knowledge,
there is no general answer to these questions. Therefore, we shall
focus on some special tensor absolute value equations.

Let $\mathcal{I}$ be an $m$th-order $n$-dimensional unit tensor,
whose entries are $1$ if and only if $i_1=\cdots =i_m$ and otherwise
zero.  A tensor $\mathcal{A}$ is called a nonnegative tensor if all
its entries are nonnegative, denoted $\mathcal{A}\ge 0$. A tensor
 is called a $Z$-tensor, if all its diagonal entries are
nonnegative and off-diagonal entries are nonpositive. $M$-tensor is
a special class of Z-tensor, which was first introduced and studied
in \cite{dingqiwei,zhangqizhou}. To define the $M$-tensors, we need
to introduce the tensor eigenvalues first. Let $\mathcal{A}$ be an
$m$th-order $n$-dimensional tensor. If a scalar $\lambda \in R$  and
a nonzero vector ${\bf x}\in R^n$ satisfy
$$
\mathcal{A}{\bf x}^{m-1}=\lambda {\bf x}^{[m-1]},
$$
where ${\bf x}^{[m-1]}=(x_1^{m-1},\ldots,x_n^{m-1})^T$, then we call
$\lambda$ an eigenvalue of $\mathcal{A}$ and $x$ a corresponding
eigenvector. Qi \cite{qi05} and Lim \cite{lim} first defined the
eigenvalues of tensors independently. The spectral radius of a
tensor $\mathcal{A}$ is defined by
$$
\rho(\mathcal{A})=\max\{|\lambda|:\,  \mbox{ $\lambda$ is an
eiegnvalue of $\mathcal{A}$}\}.
$$
A tensor $\mathcal{A}$ is called an $M$-tensor, if it can be written
as $\mathcal{A} = s\mathcal{I}-\mathcal{B}$ with $\mathcal{B}\ge 0$
 and $s\ge \rho(\mathcal{B})$; furthermore, it is called a
strong $M$-tensor if $s> \rho(\mathcal{B})$. One can refer to a
survey \cite{ZGQ} for the spectral theory of nonnegative tensors. In
this paper, we first investigate the existence of solutions for the
TAVE (\ref{TAVE}). We show that the TAVE (\ref{TAVE}) with positive
right-hand side $b$ always has a unique solution when
$\mathcal{A}-\mathcal{I}$ is strong $M$-tensor. Another sufficient
condition for the  existence of solution is also given.  Can we
compute the solution? We propose an inexact Levenberg-Marquardt
method for solving the TAVE (\ref{TAVE}).

The rest of this paper is organized as follows. In Section 2,  we
introduce the tensor absolute value equations which is a
generalization of absolute value equations with matrix case. In
Section 3, some sufficient conditions for the existence of solution
to the TAVE are given. In Section 4, we first reformulate the TAVE
as a special tensor complementarity problem and then we propose an
an inexact Levenberg-Marquardt-type algorithm for solving the TAVE.
Some numerical results are reported in Section 5. Finally, some
conclusions are given.

Throughout this paper, we assume that $m\ge 2$. We use small letters
$\texttt{x, y,\ldots,}$ for scalars, small bold letters ${\bf x},
{\bf y},\ldots,$ for vectors, capital letters $A, B,\ldots,$ for
matrixes, calligraphic letters $\mathcal{A}, \mathcal{B}, \ldots,$
for tensors, calligraphic letters $\mathcal{D}$ for diagonal tensor
whose diagonal elements are $1$ or $-1$. All the tensors discussed
in this paper are real. $T(m,n)$ denotes the set of all $m$th order
$n$-dimensional tensors. Let $\mathcal{A}=(a_{i_1i_2\ldots i_m})\in
T(m,n)$, then $\mathcal{A}$ is called a symmetric tensor if its
entries $a_{i_1i_2\ldots i_m}$ are invariant under any permutation
of their indices. $S(m,n)$ denotes the set  of all  symmetric
tensors. For such a matrix $A^T$ will denote the transpose of $A$.
The identity matrix of arbitrary dimension will be denoted by $I$.

\section{Tensor absolute value equations}

 In this section, we present some basic definitions and properties in absolute value equations, nonlinear
 complementarity problems, and nonsmooth analysis, which will be used in the sequel.

We recall the absolute value equations (AVE) of the type
\begin{equation}\label{ave}
A{\bf x}-|{\bf x}|={\bf b},
\end{equation}
where $A\in R^{n\times n}$, ${\bf b}\in R^n$ and $|{\bf x}|$ denotes
the vector with absolute values of each component of ${\bf x}$. The
AVE (\ref{ave}) has been widely investigated in many literatures
such as \cite{mangas1,mangas2,mangas3}. In \cite{mangas3}, some
results about the AVE are given, which we list as follows:
\begin{itemize}
\item[(i)] The AVE (\ref{ave}) is equivalent to the bilinear program
$$
0=\min\{((A+I){\bf x}-{\bf b})^T((A-I){\bf x}-{\bf b})|\, (A+I){\bf
x}-{\bf b}\ge {\bf 0}, (A-I){\bf x}-{\bf b}\ge {\bf 0}\},
$$
and the generalized linear complementarity problem
$$
(A+I){\bf x}-{\bf b}\ge {\bf 0},\quad  (A-I){\bf x}-{\bf b}\ge {\bf
0} \quad ((A+I){\bf x}-{\bf b})^T((A-I){\bf x}-{\bf b})=0.
$$
\item[(ii)] Let $C\in R^{n\times n}$ and ${\bf b}\in R^n$. Then
$$
(C-I){\bf z}={\bf b}, \quad {\bf z}\ge {\bf 0}\quad  \mbox{has a
solution
 ${\bf z}\in R^n$}
 $$
  implies that
  $$
  A{\bf x}-|{\bf x}|={\bf b} \quad \mbox{has a solution for any $A=CD$ with $D=diag(\pm 1)$}.$$
\end{itemize}

Clearly, the tensor absolute value equation (\ref{TAVE}) is a
generalization of the AVE (\ref{ave}) from the matrix case to the
tensor case.  Take an equation with the coefficient tensor
$\mathcal{A}\in R^{2\times 2\times 2}$ as an example. The tensor
absolute equation
$$\mathcal{A}{\bf x}^2-|{\bf x}|^2={\bf b}$$ is a condense form of
$$
\left\{\begin{array}{l}a_{111}x_1^2 + (a_{112} + a_{121})x_1x_2 +
a_{122}x_2^2-|x_1|^2 = b_1,\\
 a_{211}x^2_1 + (a_{212} + a_{221})x_1x_2 + a_{222}x^2_2-|x_2|^2 = b_2.
 \end{array}\right.
 $$
 We want to find $x_1$ and $x_2$ that satisfy the above two
equations.

The following example shows a specific tensor absolute value
equation.
\begin{example}\label{eg1} Let a tensor $\mathcal{A}\in T(4,2)$ be defined by
$a_{1111}=a_{2111}=a_{2222}=1$, $a_{1222}=-1$, and zero otherwise.
Let ${\bf b}=(1,2)^T$. Then the corresponding tensor absolute value
equation is \begin{equation}\label{equ1} \left\{
\begin{array}{rcl}
  x_1^3-x_2^3- {|x_1|}^3&=&1,\\
-2x_1^3+x_2^3- {|x_2|}^3&=&2.
  \end{array}
\right.\end{equation}
\end{example}
By simplicity computation, we see that the TAVE (\ref{equ1}) in
Example \ref{eg1} has no solution. In the next section we will
discuss the existence of solution for the TAVE (\ref{TAVE}). We can
extend the result (ii) to the TAVE and obtain a similar condition
for the existence of solution to  (\ref{TAVE}).

 Below, we introduce the classical nonlinear
complementarity problem. The tensor complementarity problem recently
introduced in \cite{songqi} is a special kind of nonlinear
complementarity problem. It will be shown in Section 4 that the TAVE
(\ref{TAVE}) can be reformulated as a special kind of generalized
tensor complementarity problem.

\begin{definition} Given a given mapping $F: R^n\to R^n$, the nonlinear
complementarity problem, denoted by NCP($F$), is to find a vector
${\bf x}\in R^n$ satisfying
$$
{\bf x}\ge {\bf 0},\quad F({\bf x})\ge {\bf 0},\quad {\bf x}^TF({\bf
x})=0.
$$
\end{definition}

Many solution methods developed for NCP($F$) or related problems are
based on reformulating them as a system of equations using so-called
NCP-functions \cite{NCP}. Here a function $\phi: R^2\to R$ is called
an NCP-function if
$$
\phi(a,b)=0\quad \Leftrightarrow \quad a\ge 0, b\ge 0, ab=0.
$$
Given an NCP-function $\phi$, let us define
$$
\Phi({\bf x})=\left(\phi(x_1,F_1({\bf x})),\ldots,\phi(x_n,F_n({\bf
x}))\right)^T.
$$
It is obvious that ${\bf x}\in R^n$ is a solution of NCP($F$) if and
only if it solves the system of nonsmooth equations $$\Phi({\bf
x})={\bf 0}.$$ For the solution of $\Phi({\bf x})={\bf 0}$, we
recall some definitions in nonsmooth analysis. Suppose that $\Theta:
U\subseteq R^{n_1}\to R^{n_2}$ is a locally Lipschitz function,
where $U$ is nonempty and open. By Rademacher's Theorem, $\Theta$ is
differentiable almost everywhere. Let $D_{\Theta}\subseteq R^{n_1}$
denote the set of points at which $\Theta$ is differentiable. For
any ${\bf x}\in D_{\Theta}$, we write $J\Theta({\bf x})$ for the
usual $n_2\times n_1$ Jacobian matrix of partial derivatives. The
$B$-{\it subdifferential} of $\Theta$ at ${\bf x}\in U$ is the set
defined by
$$
\partial_B \Theta({\bf x})=\left\{V\in R^{n_2\times n_1}|\, \mbox{$\exists \{{\bf x}^k\}\subseteq D_{\Theta}$ with ${\bf x}^k\to {\bf x}$, $J\Theta({\bf x}^k)\to V$}\right\}.
$$
The Clarke's {\it generalized Jacobian} of $\Theta$ at ${\bf x}$ is
the set defined by
$$
\partial \Theta({\bf x})=\mbox{co($
\partial_B \Theta({\bf x})$)},
$$
where ``co" denotes the convex hull. Then, $
\partial \Theta({\bf x})$ is a nonempty convex compact subset of $R^{n_2\times n_1}$ \cite{clark9}.
The function $\Theta$ is {\it semismooth} \cite{hin17,qi32} at ${\bf
x}\in R^{n_1}$ if
$$
\lim_{\begin{subarray}{c}V\in \partial \Theta({\bf x}+t\tilde{{\bf d}})\\
\tilde{{\bf d}}\to {\bf d},\,t\downarrow
0\end{subarray}}V\tilde{d}$$ exists for all ${\bf d}\in R^{n_1}$. If
$\Theta$ is semismooth at all ${\bf x}\in U$, we call $\Theta$
semismooth on $U$. The function $\Theta$ is called {\it strongly
semismooth} \cite{qi33} if it is semismooth and for any ${\bf x}\in
U$ and $V\in \partial \Theta({\bf x}+t{\bf d})$,
$$
Vd-\Theta'({\bf x};{\bf d})=O(\|{\bf d}\|^2),\quad {\bf d}\to {\bf
0},$$ where $\Theta'({\bf x};{\bf d})$ denotes the directional
derivative \cite{bon3} of $\Theta$ at ${\bf x}$ in direction ${\bf
d}$, i.e.,
$$
\Theta'({\bf x};{\bf d})=\lim_{t\downarrow 0}\frac{\Theta({\bf
x}+t{\bf d})-\Theta({\bf x})}{t}.
$$
Note that if the function $\Theta$ is semismooth at ${\bf x}$, the
directional derivative $\Theta'({\bf x};{\bf d})$ exists for all
${\bf d}\in R^{n_1}$ and
$$
\Theta'({\bf x};{\bf d})=\lim_{\substack{ V\in \partial \Theta({\bf x}+t\tilde{{\bf d}})\\
\tilde{{\bf d}}\to {\bf d},\, t\downarrow 0}}V\tilde{d}.$$

We now present some NCP-functions which are widely used in nonlinear
complementarity problems. For more details about NCP-functions and
their smoothing approximations, one can refer to
\cite{qisunzhou,zhou} and references therein.

Here we give some well-known NCP-functions as follows:
\begin{itemize}
\item The min function: $$\phi(a,b)=\min\{a,b\}.$$

\item The Fischer-Burmeister function:
$$\phi_{FB}(a,b)=a+b-\sqrt{a^2+b^2}.$$
\end{itemize}
It has been shown that all these NCP-functions are globally
Lipschitz continuous, directionally differentiable, and strongly
semismooth \cite{fbf,minf}. For example, the generalized gradient
$\partial\phi_{FB}(a,b)$ of $\phi_{FB}(a,b)$ is equal to the set of
all $(v_a,v_b)$ such that
$$
(v_a,v_b)=\left\{\begin{array}{ll}\left(1-\frac{a}{\sqrt{a^2+b^2}}, 1-\frac{b}{\sqrt{a^2+b^2}}\right) & \quad \mbox{if $(a,b)\ne (0,0)$},\\
(1-\xi,1-\varsigma) & \quad \mbox{if $(a,b)= (0,0)$},
\end{array}\right.
$$
where $(\xi,\varsigma)$ is any vector satisfying
$\xi^2+\varsigma^2\le 1$.

In Section 4, we will use the Fischer-Burmeister function to
reformulate the TAVE (\ref{TAVE}) as  a system of equations and then
we will propose an algorithm to solve the system of equations.

We now introduce the tensor complementarity problem which first
defined by Song and Qi \cite{songqi}.
\begin{definition} Given any given tensor $\mathcal{A}\in T(m,n)$  and vector ${\bf q}\in R^n$, the
tensor complementarity problem, denoted by TCP($\mathcal{A},{\bf
q}$), is to find a vector ${\bf x}\in R^n$ satisfying
$$
{\bf x}\ge {\bf 0},\quad \mathcal{A}{\bf x}^{m-1}+{\bf q}\ge {\bf
0},\quad {\bf x}^T(\mathcal{A}{\bf x}^{m-1}+{\bf q})=0.
$$
\end{definition}
Note that when $n = 2$, the tensor $\mathcal{A}$ reduces to a
matrix, denoted by $A$, and the TCP($\mathcal{A},{\bf q}$) becomes:
find  a vector ${\bf x}\in R^n$ such that
$$
{\bf x}\ge {\bf 0},\quad A{\bf x}+{\bf q}\ge {\bf 0},\quad {\bf
x}^T(A{\bf x}+{\bf q})=0,
$$
 which is just the
linear complementarity problem \cite{LCP}. Very recently, a class of
$n$-person noncooperative games are in \cite{huangqi}, where the
utility function of every player is given by a homogeneous
polynomial defined by the payoff tensor of that player, which is a
natural extension of the bimatrix game where the utility function of
every player is given by a quadratic form defined by the payoff
matrix of that player. Such a problem is called the multilinear
game. The multilinear game is reformulated as a tensor
complementarity problem. Some semismooth Newton-type methods are
recently proposed for solving the tensor complementarity problems
(see, e.g., \cite{chenqi}). In Section 4, we will extend the result
(i) to the TAVE  (\ref{TAVE}) and show that the TAVE  (\ref{TAVE})
is equivalent to a bi-multilinear program and a generalized tensor
complementarity problem.

\section{Existence of solutions}

In this section, we give some sufficient conditions for the
existence of solutions to the TAVE (\ref{TAVE}). Specially, we
extend the result (ii) about the AVE (\ref{ave}) to the TAVE
(\ref{TAVE}).

We need the following lemmas which are recently established in
\cite[Theorems 3.2, 3.3, 3.4]{dingwei}.
\begin{lemma}\label{lem1}
Let $\mathcal{A}\in T(m,n)$. If $\mathcal{A}$ is a strong
$M$-tensor, then for every positive vector ${\bf b}$ the multilinear
system of equations $\mathcal{A}{\bf x}^{m-1}={\bf b}$ has a unique
positive solution.
\end{lemma}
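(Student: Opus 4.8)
The plan is to prove existence and uniqueness separately, exploiting the splitting $\mathcal{A}=s\mathcal{I}-\mathcal{B}$ with $\mathcal{B}\ge 0$ and $s>\rho(\mathcal{B})$ that defines a strong $M$-tensor. Writing $\mathcal{I}{\bf x}^{m-1}={\bf x}^{[m-1]}$, the system $\mathcal{A}{\bf x}^{m-1}={\bf b}$ is equivalent to ${\bf x}^{[m-1]}=\frac{1}{s}(\mathcal{B}{\bf x}^{m-1}+{\bf b})$, which, for ${\bf x}>{\bf 0}$, suggests the fixed-point map $T({\bf x})=\bigl(\frac{1}{s}(\mathcal{B}{\bf x}^{m-1}+{\bf b})\bigr)^{[1/(m-1)]}$, the root being taken componentwise. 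Since $\mathcal{B}\ge 0$ and ${\bf b}>{\bf 0}$, the map $T$ sends positive vectors to positive vectors and is monotone: ${\bf 0}\le{\bf u}\le{\bf v}$ implies $T({\bf u})\le T({\bf v})$. Solving the system is thus the same as finding a positive fixed point of $T$.

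For existence I would run the monotone iteration ${\bf x}^{(k+1)}=T({\bf x}^{(k)})$ starting from ${\bf x}^{(0)}={\bf 0}$. Then $T({\bf 0})=({\bf b}/s)^{[1/(m-1)]}>{\bf 0}$, so monotonicity makes $\{{\bf x}^{(k)}\}$ nondecreasing. To bound it above I would produce an upper solution, i.e. a ${\bf v}>{\bf 0}$ with $\mathcal{A}{\bf v}^{m-1}\ge{\bf b}$, equivalently $T({\bf v})\le{\bf v}$. Here the condition $s>\rho(\mathcal{B})$ enters: by the Perron--Frobenius theory for nonnegative tensors it is equivalent to the semipositivity of $\mathcal{A}$, so there is ${\bf w}>{\bf 0}$ with ${\bf c}:=\mathcal{A}{\bf w}^{m-1}>{\bf 0}$; taking ${\bf v}=\lambda{\bf w}$ and using the homogeneity $\mathcal{A}{\bf v}^{m-1}=\lambda^{m-1}{\bf c}$, any $\lambda$ with $\lambda^{m-1}\ge\max_i b_i/c_i$ yields $\mathcal{A}{\bf v}^{m-1}\ge{\bf b}$. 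Then $T$ maps the order interval $[{\bf 0},{\bf v}]$ into itself, the bounded monotone sequence converges to a fixed point ${\bf x}^*$, and ${\bf b}>{\bf 0}$ forces ${\bf x}^*>{\bf 0}$.

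For uniqueness I would use a maximal-ratio argument. If ${\bf x},{\bf y}>{\bf 0}$ both solve the system, set $t=\max_i x_i/y_i$, attained at some index $j$, so that ${\bf x}\le t{\bf y}$ and $x_j=t\,y_j$. Comparing the $j$th components of $\mathcal{A}{\bf x}^{m-1}={\bf b}$ and $\mathcal{A}{\bf y}^{m-1}={\bf b}$ through $\mathcal{A}=s\mathcal{I}-\mathcal{B}$, and using $(\mathcal{B}{\bf x}^{m-1})_j\le t^{m-1}(\mathcal{B}{\bf y}^{m-1})_j$ (since $\mathcal{B}\ge 0$ and ${\bf x}\le t{\bf y}$) together with $x_j^{m-1}=t^{m-1}y_j^{m-1}$, I would obtain $b_j\ge t^{m-1}b_j$; as $b_j>0$ this gives $t\le 1$, i.e. ${\bf x}\le{\bf y}$. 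Interchanging the roles of ${\bf x}$ and ${\bf y}$ gives ${\bf y}\le{\bf x}$, whence ${\bf x}={\bf y}$.

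The main obstacle is the boundedness step: constructing the upper solution ${\bf v}$ is exactly where the strict inequality $s>\rho(\mathcal{B})$ is indispensable, and it rests on the semipositivity characterization of strong $M$-tensors, which in turn comes from Perron--Frobenius theory for nonnegative tensors. The remaining pieces—monotonicity and continuity of $T$, strict positivity of the limit, and the ratio argument for uniqueness—are comparatively routine.
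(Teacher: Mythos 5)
Your argument is correct, but note that the paper does not actually prove Lemma~\ref{lem1}: it is imported from Ding and Wei \cite[Theorem 3.2]{dingwei} with no in-paper proof, so the honest comparison is between your reconstruction and the cited source rather than anything in this manuscript. Your route is essentially the standard one behind that theorem. For existence, the monotone iteration ${\bf x}^{(k+1)}=T({\bf x}^{(k)})$ started at ${\bf 0}$ is nondecreasing (monotonicity of $T$ uses only $\mathcal{B}\ge 0$), is trapped in the order interval $[{\bf 0},{\bf v}]$ once you have an upper solution ${\bf v}$ with $\mathcal{A}{\bf v}^{m-1}\ge{\bf b}$, and its limit is a fixed point by continuity and is positive because it dominates $T({\bf 0})>{\bf 0}$. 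The construction of ${\bf v}$ is indeed the crux, and your appeal to the semipositivity characterization of strong $M$-tensors is legitimate and economical: that equivalence is exactly \cite[Theorem 3]{dingqiwei}, which this paper already invokes in the proof of Theorem~\ref{thm2}, so you import nothing the paper does not already rely on. For uniqueness, the maximal-ratio argument is sound: with $t=\max_i x_i/y_i$ attained at $j$, the estimate $(\mathcal{B}{\bf x}^{m-1})_j\le t^{m-1}(\mathcal{B}{\bf y}^{m-1})_j$ combined with $x_j^{m-1}=t^{m-1}y_j^{m-1}$ gives $b_j\ge t^{m-1}b_j$, and $b_j>0$ forces $t\le 1$. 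One observation worth recording: this uniqueness step never uses $s>\rho(\mathcal{B})$, so it actually shows that a positive solution of $\mathcal{A}{\bf x}^{m-1}={\bf b}$ with ${\bf b}>{\bf 0}$ is unique for any tensor of the form $s\mathcal{I}-\mathcal{B}$ with $\mathcal{B}\ge 0$; the strong $M$-tensor hypothesis is needed only for existence, precisely where you located it. The net effect of your proof is to make the lemma self-contained modulo a single cited fact, which is a genuine improvement over the paper's bare citation.
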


\begin{lemma}\label{lem2}
Let $\mathcal{A}\in T(m,n)$ be a $Z$-tensor. Then it  is a strong
$M$-tensor if and only if   the multilinear system of equations
$\mathcal{A}{\bf x}^{m-1}={\bf b}$ has a unique positive solution
for every positive vector ${\bf b}$.
\end{lemma}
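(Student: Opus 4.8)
The plan is to prove the two implications separately, observing that the forward direction is immediate and concentrating the real effort on the converse. The forward implication --- if $\mathcal{A}$ is a strong $M$-tensor then $\mathcal{A}{\bf x}^{m-1}={\bf b}$ has a unique positive solution for every positive ${\bf b}$ --- is exactly the content of Lemma \ref{lem1} (and a strong $M$-tensor is automatically a $Z$-tensor), so I would simply invoke it. It then remains to prove the converse: a $Z$-tensor $\mathcal{A}$ whose associated multilinear system has a unique positive solution for every positive right-hand side must be a strong $M$-tensor.

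For the converse I would first fix the canonical representation of $\mathcal{A}$. Set $s_0=\max_i a_{ii\cdots i}$ and $\mathcal{B}_0=s_0\mathcal{I}-\mathcal{A}$. Since $\mathcal{A}$ is a $Z$-tensor, its diagonal entries are nonnegative and its off-diagonal entries are nonpositive; hence the diagonal entries $s_0-a_{ii\cdots i}$ and the off-diagonal entries $-a_{i_1\cdots i_m}$ of $\mathcal{B}_0$ are all nonnegative, so $\mathcal{B}_0\ge 0$ and $\mathcal{A}=s_0\mathcal{I}-\mathcal{B}_0$ is a legitimate representation of the type appearing in the definition of an $M$-tensor. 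By that definition it therefore suffices to establish the strict inequality $s_0>\rho(\mathcal{B}_0)$.

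To obtain this strict inequality I would use a single positive solution together with an upper Collatz--Wielandt bound. Fix any positive vector ${\bf b}$ and let ${\bf x}>{\bf 0}$ be its positive solution, which exists by hypothesis. Writing $\mathcal{A}=s_0\mathcal{I}-\mathcal{B}_0$, the equation $\mathcal{A}{\bf x}^{m-1}={\bf b}$ reads $s_0 x_i^{m-1}-(\mathcal{B}_0{\bf x}^{m-1})_i=b_i>0$ for each $i$, so that $(\mathcal{B}_0{\bf x}^{m-1})_i/x_i^{m-1}<s_0$ for every $i$ and hence $\max_i (\mathcal{B}_0{\bf x}^{m-1})_i/x_i^{m-1}<s_0$. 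The key ingredient is the estimate $\rho(\mathcal{B}_0)\le \max_i (\mathcal{B}_0{\bf x}^{m-1})_i/x_i^{m-1}$, valid for every positive ${\bf x}$ and every nonnegative tensor; combined with the previous bound it gives $\rho(\mathcal{B}_0)<s_0$, as required. I expect this Perron--Frobenius estimate for nonnegative tensors --- the spectral theory surveyed in \cite{ZGQ} --- to be the only nontrivial point, since everything else is elementary. I would also remark that this direction uses only the \emph{existence} of one positive solution for one positive ${\bf b}$, so neither uniqueness nor the ``for every ${\bf b}$'' quantifier is actually needed here.
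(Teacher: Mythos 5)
Your proof is correct. Note first that the paper does not actually prove Lemma~\ref{lem2}: it imports it verbatim from \cite[Theorems 3.2, 3.3, 3.4]{dingwei}, so there is no in-paper argument to compare against line by line. Your forward direction is exactly the intended one (an appeal to Lemma~\ref{lem1}). For the converse, your route --- fix the canonical splitting $\mathcal{A}=s_0\mathcal{I}-\mathcal{B}_0$ with $s_0=\max_i a_{i\cdots i}$, extract one positive solution ${\bf x}>{\bf 0}$ of $\mathcal{A}{\bf x}^{m-1}={\bf b}>{\bf 0}$, and conclude $\rho(\mathcal{B}_0)\le\max_i(\mathcal{B}_0{\bf x}^{m-1})_i/x_i^{m-1}<s_0$ --- is essentially the standard semi-positivity argument: the existence of ${\bf x}>{\bf 0}$ with $\mathcal{A}{\bf x}^{m-1}>{\bf 0}$ characterizes strong $M$-tensors among $Z$-tensors (\cite[Theorem 3]{dingqiwei}), and that characterization is established in the literature by precisely the Collatz--Wielandt upper bound you invoke, which does hold for arbitrary nonnegative tensors without any irreducibility assumption (Yang and Yang; see the survey \cite{ZGQ}). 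The paper itself leans on this same criterion in the Remark following Theorem~\ref{thm2}, so your argument is consistent with how the authors use the result. Your closing observation is also correct and worth retaining: the converse needs only the existence of a single positive solution for a single positive ${\bf b}$, so neither uniqueness nor the universal quantifier over ${\bf b}$ is used. The one dependency you should state explicitly as an external theorem rather than a remark is the bound $\rho(\mathcal{B})\le\max_i(\mathcal{B}{\bf x}^{m-1})_i/x_i^{m-1}$ for every ${\bf x}>{\bf 0}$; it is standard but not elementary, and it is the entire load-bearing step of the converse.
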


\begin{lemma}\label{lem3}
Let $\mathcal{A}\in T(m,n)$ be an $M$-tensor and ${\bf b}\ge {\bf
0}$.  If  there exists ${\bf v}\ge {\bf 0}$ such that
$\mathcal{A}{\bf v}^{m-1}\ge {\bf b}$, then the multilinear system
of equations $\mathcal{A}{\bf x}^{m-1}={\bf b}$ has a nonnegative
solution.
\end{lemma}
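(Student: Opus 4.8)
The plan is to recast $\mathcal{A}{\bf x}^{m-1}={\bf b}$ as a fixed-point problem on the nonnegative orthant and to solve it by monotone iteration, using the $M$-tensor splitting. Write $\mathcal{A}=s\mathcal{I}-\mathcal{B}$ with $\mathcal{B}\ge 0$ and $s\ge\rho(\mathcal{B})$. I may assume $s>0$: replacing $s$ by any larger $s'$ and $\mathcal{B}$ by $\mathcal{B}+(s'-s)\mathcal{I}\ge 0$ keeps the splitting valid, because adding $(s'-s)\mathcal{I}$ shifts the spectral radius by $s'-s$, so that $\rho(\mathcal{B}+(s'-s)\mathcal{I})=\rho(\mathcal{B})+(s'-s)\le s'$. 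Since $\mathcal{A}{\bf x}^{m-1}=s{\bf x}^{[m-1]}-\mathcal{B}{\bf x}^{m-1}$, for ${\bf x}\ge{\bf 0}$ the system is equivalent to the fixed-point equation ${\bf x}=T({\bf x})$, where $T:R^n_+\to R^n_+$ is defined componentwise by $(T({\bf x}))_i=\big(\tfrac1s(\mathcal{B}{\bf x}^{m-1}+{\bf b})_i\big)^{1/(m-1)}$; this is well defined because $\mathcal{B}\ge 0$ and ${\bf b}\ge{\bf 0}$ make the argument of each root nonnegative.

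The first step is to record two properties of $T$. It is continuous, being the composition of the polynomial map ${\bf x}\mapsto\mathcal{B}{\bf x}^{m-1}$ with the continuous scalar root $t\mapsto t^{1/(m-1)}$ on $[0,\infty)$. It is also order-preserving on $R^n_+$: if ${\bf 0}\le{\bf x}\le{\bf y}$ then every monomial $x_{i_2}\cdots x_{i_m}$ is at most $y_{i_2}\cdots y_{i_m}$, so $\mathcal{B}{\bf x}^{m-1}\le\mathcal{B}{\bf y}^{m-1}$ by nonnegativity of $\mathcal{B}$, and monotonicity of the root gives $T({\bf x})\le T({\bf y})$. The second step reinterprets the hypothesis: $\mathcal{A}{\bf v}^{m-1}\ge{\bf b}$ means $s{\bf v}^{[m-1]}\ge\mathcal{B}{\bf v}^{m-1}+{\bf b}$, and taking componentwise roots yields ${\bf v}\ge T({\bf v})$, so ${\bf v}$ is a supersolution.

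The third step is the iteration itself. Put ${\bf x}^0={\bf 0}$ and ${\bf x}^{k+1}=T({\bf x}^k)$. As $T({\bf 0})\ge{\bf 0}={\bf x}^0$, order preservation gives by induction that $\{{\bf x}^k\}$ is nondecreasing componentwise; a second induction shows ${\bf x}^k\le{\bf v}$ for all $k$, the step being ${\bf x}^{k+1}=T({\bf x}^k)\le T({\bf v})\le{\bf v}$. Thus each component of $\{{\bf x}^k\}$ is nondecreasing and bounded above by the corresponding component of ${\bf v}$, so the sequence converges to some ${\bf x}^\ast\in[{\bf 0},{\bf v}]$. Letting $k\to\infty$ in ${\bf x}^{k+1}=T({\bf x}^k)$ and using continuity of $T$ gives ${\bf x}^\ast=T({\bf x}^\ast)$, i.e.\ a nonnegative solution of $\mathcal{A}{\bf x}^{m-1}={\bf b}$.

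The step I expect to carry the real weight is the order preservation of $T$; once monotonicity is in hand the rest is bookkeeping. The only delicate technical point is the reduction to $s>0$, which rests on the Perron--Frobenius identity $\rho(\mathcal{B}+c\mathcal{I})=\rho(\mathcal{B})+c$ for $c\ge 0$. An alternative route is to perturb to the strong $M$-tensor $\mathcal{A}+\epsilon\mathcal{I}$ with a strictly positive right-hand side, invoke Lemma \ref{lem1} to obtain a unique positive solution ${\bf x}_\epsilon$, bound $\{{\bf x}_\epsilon\}$ from above by ${\bf v}$, and let $\epsilon\downarrow 0$; this, however, needs an inverse-monotonicity (comparison) theorem for strong $M$-tensors, so the self-contained monotone iteration above is preferable.
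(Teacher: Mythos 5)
Your proof is correct. Note that the paper offers no proof of Lemma~\ref{lem3} at all: it is quoted verbatim from \cite{dingwei} (Theorems 3.2--3.4 there), so there is no internal argument to compare against; your monotone fixed-point iteration ${\bf x}^{k+1}=T({\bf x}^k)$, squeezed between the subsolution ${\bf 0}$ and the supersolution ${\bf v}$, is essentially the standard argument used in that reference, and every step (the reduction to $s>0$, the order-preservation of $T$ on $R^n_+$, and the passage to the limit by continuity) checks out. One small observation: the condition $s\ge\rho(\mathcal{B})$ is never actually used in your argument --- the supersolution ${\bf v}$ does all the bounding --- so what you have really proved needs only a splitting $\mathcal{A}=s\mathcal{I}-\mathcal{B}$ with $s>0$ and $\mathcal{B}\ge{\bf 0}$, which is slightly more general than the stated hypothesis.
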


By the above lemmas, we have the following theorems.
\begin{theorem}\label{thm1}
Let $\mathcal{A}\in T(m,n)$. If $\mathcal{A}$ can be written as
$\mathcal{A}=c\mathcal{I}-\mathcal{B}$ with $\mathcal{B}\ge 0$ and
$c>\rho(\mathcal{B})+1$, then for every positive vector ${\bf b}$
the TAVE (\ref{TAVE}) has a unique positive solution.
\end{theorem}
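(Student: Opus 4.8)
The plan is to exploit the fact that on the positive orthant the absolute value term collapses into a unit-tensor term, turning the TAVE into an ordinary multilinear system to which Lemma \ref{lem1} applies. First I would restrict attention to positive vectors ${\bf x} > {\bf 0}$. For such ${\bf x}$ one has $|x_i| = x_i$, so $|{\bf x}|^{[m-1]} = {\bf x}^{[m-1]} = \mathcal{I}{\bf x}^{m-1}$, the last equality holding because $\mathcal{I}$ is the unit tensor. Consequently the TAVE (\ref{TAVE}) restricted to the positive orthant reads
$$(\mathcal{A} - \mathcal{I}){\bf x}^{m-1} = {\bf b}.$$

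Next I would identify the coefficient tensor. Writing $\mathcal{A} = c\mathcal{I} - \mathcal{B}$ gives $\mathcal{A} - \mathcal{I} = (c-1)\mathcal{I} - \mathcal{B}$. Since $\mathcal{B} \ge 0$ and $c - 1 > \rho(\mathcal{B})$ by hypothesis, this is precisely a strong $M$-tensor in the sense defined in the introduction. Lemma \ref{lem1} then guarantees that for every positive right-hand side ${\bf b}$ the system $(\mathcal{A} - \mathcal{I}){\bf x}^{m-1} = {\bf b}$ has a unique positive solution, which I denote ${\bf x}^*$.

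Finally I would close the loop on existence and uniqueness. For existence, since ${\bf x}^* > {\bf 0}$, the reduction above is reversible: ${\bf x}^*$ satisfies $\mathcal{A}({\bf x}^*)^{m-1} - |{\bf x}^*|^{[m-1]} = (\mathcal{A} - \mathcal{I})({\bf x}^*)^{m-1} = {\bf b}$, so ${\bf x}^*$ is a positive solution of the TAVE. For uniqueness, any positive solution of the TAVE must, by the same reduction, solve $(\mathcal{A} - \mathcal{I}){\bf x}^{m-1} = {\bf b}$, and Lemma \ref{lem1} tells us this system has only one positive solution; hence ${\bf x}^*$ is the unique positive solution of the TAVE. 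There is no serious obstacle here; the only point requiring care is the observation that the nonsmooth term $|{\bf x}|^{[m-1]}$ becomes the multilinear term $\mathcal{I}{\bf x}^{m-1}$ on the positive orthant, which is exactly what lets the strong $M$-tensor machinery of Lemma \ref{lem1} be invoked directly.
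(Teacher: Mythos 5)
Your proposal is correct and follows essentially the same route as the paper: rewrite $\mathcal{A}-\mathcal{I}=(c-1)\mathcal{I}-\mathcal{B}$, recognize it as a strong $M$-tensor, and invoke Lemma \ref{lem1} on the reduced system $(\mathcal{A}-\mathcal{I}){\bf x}^{m-1}={\bf b}$. Your version is in fact slightly more explicit than the paper's in spelling out why $|{\bf x}|^{[m-1]}=\mathcal{I}{\bf x}^{m-1}$ on the positive orthant and why the reduction is reversible for both existence and uniqueness.
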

\begin{proof}
Let $s=c-1$. Then $\mathcal{A}=c\mathcal{I}-\mathcal{B}$ yields
$$\mathcal{A}-\mathcal{I}=s\mathcal{I}-\mathcal{B},\quad \mathcal{B}\ge 0,\quad s>\rho(\mathcal{B}),$$
which implies that $\mathcal{A}-\mathcal{I}$ is a strong $M$-tensor.
By Lemma \ref{lem1}, the multilinear system of equations
$$(\mathcal{A}-\mathcal{I}){\bf x}^{m-1}={\bf b}$$ has a unique positive solution for every positive vector ${\bf b}$. Hence,
for every positive vector ${\bf b}$, the TAVE (\ref{TAVE}) has a
unique positive solution. 
\end{proof}

Combining \cite[Theorem 3]{dingqiwei} and Lemma \ref{lem2}, we can
rewrite the above theorem into an equivalent condition for
$\mathcal{A}-\mathcal{I}$ being a strong $M$-tensor.
\begin{theorem}\label{thm2}
Let $\mathcal{A}\in T(m,n)$ be a $Z$-tensor. Then $\mathcal{A}$ can
be written as  the form of
\begin{equation}\label{equ2}\mathcal{A}=c\mathcal{I}-\mathcal{B}, \quad\mathcal{B}\ge 0,\quad c>\rho(\mathcal{B})+1 \end{equation}
 if and only if for every positive vector ${\bf b}$ the TAVE (\ref{TAVE}) has a unique positive solution.
\end{theorem}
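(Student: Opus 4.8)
The plan is to reduce the whole statement to the multilinear system governed by $\mathcal{A}-\mathcal{I}$ and then feed it into the strong $M$-tensor machinery already available. The one decisive observation is that a \emph{positive} solution of the TAVE (\ref{TAVE}) is nothing but a positive solution of an honest multilinear system: for ${\bf x}>{\bf 0}$ we have $|{\bf x}|^{[m-1]}={\bf x}^{[m-1]}=\mathcal{I}{\bf x}^{m-1}$, so (\ref{TAVE}) collapses to
$$(\mathcal{A}-\mathcal{I}){\bf x}^{m-1}={\bf b}.$$
Consequently, ``for every positive ${\bf b}$ the TAVE (\ref{TAVE}) has a unique positive solution'' is equivalent to ``$(\mathcal{A}-\mathcal{I}){\bf x}^{m-1}={\bf b}$ has a unique positive solution for every positive ${\bf b}$.'' Alongside this I would record the elementary bookkeeping, already used in the proof of Theorem \ref{thm1}: setting $s=c-1$, the splitting (\ref{equ2}) for $\mathcal{A}$ is \emph{equivalent} to $\mathcal{A}-\mathcal{I}=s\mathcal{I}-\mathcal{B}$ with $\mathcal{B}\ge 0$ and $s>\rho(\mathcal{B})$, i.e. to $\mathcal{A}-\mathcal{I}$ being a strong $M$-tensor.

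With these two reductions the argument splits cleanly. For the ``only if'' direction I would simply quote Theorem \ref{thm1} (equivalently, apply Lemma \ref{lem1} to the strong $M$-tensor $\mathcal{A}-\mathcal{I}$) to obtain a unique positive solution for every positive ${\bf b}$. For the ``if'' direction I would apply Lemma \ref{lem2} to $\mathcal{A}-\mathcal{I}$: unique positive solvability of $(\mathcal{A}-\mathcal{I}){\bf x}^{m-1}={\bf b}$ for all positive ${\bf b}$ forces $\mathcal{A}-\mathcal{I}$ to be a strong $M$-tensor, and the translation above then yields $c=s+1>\rho(\mathcal{B})+1$, which is exactly (\ref{equ2}). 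The role of \cite[Theorem 3]{dingqiwei} is to certify that, for a $Z$-tensor, being a strong $M$-tensor is the same as admitting such a $c\mathcal{I}-\mathcal{B}$ splitting, closing the loop.

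The step I expect to be delicate is the legitimacy of invoking Lemma \ref{lem2} for $\mathcal{A}-\mathcal{I}$ rather than $\mathcal{A}$, since Lemma \ref{lem2} presumes a $Z$-tensor. The off-diagonal entries of $\mathcal{A}-\mathcal{I}$ coincide with those of $\mathcal{A}$ and hence remain nonpositive, so the only point in question is the diagonal-sign clause: $(\mathcal{A}-\mathcal{I})_{i\cdots i}=a_{i\cdots i}-1$ must stay nonnegative. In the ``only if'' direction this is automatic, because for a nonnegative tensor the spectral radius dominates every diagonal entry, $\rho(\mathcal{B})\ge\max_i b_{i\cdots i}$ (by monotonicity of the spectral radius under the entrywise order, comparing $\mathcal{B}$ with its diagonal part), so $c>\rho(\mathcal{B})+1$ gives $a_{i\cdots i}=c-b_{i\cdots i}>1$. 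In the ``if'' direction I would lean on \cite[Theorem 3]{dingqiwei}, whose splitting characterization of strong $M$-tensors makes the diagonal-sign condition a \emph{consequence} rather than a hypothesis to be checked by hand. Pinning down this $Z$-tensor bookkeeping for $\mathcal{A}-\mathcal{I}$ is really the only content beyond the positive-orthant reduction, which by itself already converts the theorem into a corollary of Lemmas \ref{lem1} and \ref{lem2}.
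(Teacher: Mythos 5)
Your proposal is correct and follows essentially the same route as the paper: both directions reduce the TAVE on the positive orthant to the multilinear system $(\mathcal{A}-\mathcal{I})\mathbf{x}^{m-1}=\mathbf{b}$ and then invoke Theorem \ref{thm1} (via Lemma \ref{lem1}) one way and the Ding--Qi--Wei/Ding--Wei characterization of strong $M$-tensors (Lemma \ref{lem2}, \cite[Theorem 3]{dingqiwei}) the other way. Your explicit check that $\mathcal{A}-\mathcal{I}$ remains a $Z$-tensor is a point the paper simply asserts without comment, so that extra care is welcome rather than a deviation.
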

\begin{proof}
On one hand, by Theorem \ref{thm1}, we have the existence and
uniqueness of the positive solution of the TAVE (\ref{TAVE}) for
every positive vector ${\bf b}$. On the other hand, if for every
positive vector ${\bf b}$ the TAVE (\ref{TAVE}) has a unique
positive solution, then there exists a vector ${\bf x}> {\bf 0}$
such that $$(\mathcal{A}-\mathcal{I}){\bf x}^{m-1}={\bf b}>{\bf
0}.$$ Since $\mathcal{A}$  is a $Z$-tensor,
$\mathcal{A}-\mathcal{I}$  is also a $Z$-tensor. Thus, by
\cite[Theorem 3]{dingqiwei}, $\mathcal{A}-\mathcal{I}$ is a strong
$M$-tensor and then the form of (\ref{equ2}) holds. 
\end{proof}

{\sc Remark.} The sufficient condition in Theorem \ref{thm2} can be
weakened as follows: if the TAVE (\ref{TAVE}) has a nonnegative
solution for every positive vector ${\bf b}$, then we also have the
form (\ref{equ2}). In fact, let ${\bf x}\ge {\bf 0}$ be a solution
of the TAVE (\ref{TAVE}). Then there exists ${\bf x}\ge {\bf 0}$
such that $(\mathcal{A}-\mathcal{I}){\bf x}^{m-1}>{\bf 0}$. By
\cite[Theorem 3]{dingqiwei}, we can obtain the conclusion.

\begin{theorem}
Let ${\bf b}\ge {\bf 0}$ and  $\mathcal{A}\in T(m,n)$ be in the form
of  $\mathcal{A}=c\mathcal{I}-\mathcal{B}$ with $\mathcal{B}\ge 0$
and $c=\rho(\mathcal{B})+1$. If there exists a vector ${\bf v}\ge
{\bf 0}$ such that $(\mathcal{A}-\mathcal{I}){\bf v}^{m-1}\ge {\bf
b}$, then  the TAVE (\ref{TAVE}) has a nonnegative solution.
\end{theorem}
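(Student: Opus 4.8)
The plan is to reduce the TAVE to the multilinear system $(\mathcal{A}-\mathcal{I}){\bf x}^{m-1}={\bf b}$ over the nonnegative orthant and then invoke Lemma \ref{lem3}. First I would record the structural fact that makes this work. Setting $s=c-1$, the hypothesis $\mathcal{A}=c\mathcal{I}-\mathcal{B}$ with $c=\rho(\mathcal{B})+1$ gives
$$
\mathcal{A}-\mathcal{I}=(c-1)\mathcal{I}-\mathcal{B}=s\mathcal{I}-\mathcal{B},\quad \mathcal{B}\ge 0,\quad s=\rho(\mathcal{B}),
$$
so $\mathcal{A}-\mathcal{I}$ is an $M$-tensor (not necessarily strong, since we only have equality $s=\rho(\mathcal{B})$ rather than a strict inequality). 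This is exactly the setting of Lemma \ref{lem3}, which treats general $M$-tensors and nonnegative right-hand sides.

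The second ingredient is the observation that on the nonnegative orthant the absolute-value term collapses. For any ${\bf x}\ge {\bf 0}$ we have $|x_i|^{m-1}=x_i^{m-1}$ for each $i$, hence $|{\bf x}|^{[m-1]}={\bf x}^{[m-1]}=\mathcal{I}{\bf x}^{m-1}$. Therefore a nonnegative vector ${\bf x}$ solves the TAVE (\ref{TAVE}) if and only if
$$
\mathcal{A}{\bf x}^{m-1}-\mathcal{I}{\bf x}^{m-1}=(\mathcal{A}-\mathcal{I}){\bf x}^{m-1}={\bf b}.
$$
Thus it suffices to produce a nonnegative solution of the multilinear system $(\mathcal{A}-\mathcal{I}){\bf x}^{m-1}={\bf b}$.

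Now I would apply Lemma \ref{lem3} directly to the tensor $\mathcal{A}-\mathcal{I}$. Its three hypotheses are all in place: $\mathcal{A}-\mathcal{I}$ is an $M$-tensor by the first paragraph, ${\bf b}\ge {\bf 0}$ is assumed, and the theorem's standing hypothesis supplies a vector ${\bf v}\ge {\bf 0}$ with $(\mathcal{A}-\mathcal{I}){\bf v}^{m-1}\ge {\bf b}$. The lemma then yields a nonnegative ${\bf x}^\ast$ with $(\mathcal{A}-\mathcal{I})({\bf x}^\ast)^{m-1}={\bf b}$, and by the reduction above this same ${\bf x}^\ast$ solves the TAVE. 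The argument is short because almost all of the analytic work is hidden inside Lemma \ref{lem3}; the only points requiring care are verifying that the shift by $\mathcal{I}$ preserves the $M$-tensor (as opposed to strong $M$-tensor) structure and confirming that the sub-solution condition $(\mathcal{A}-\mathcal{I}){\bf v}^{m-1}\ge {\bf b}$ transfers verbatim to the hypothesis of the lemma. I do not anticipate a genuine obstacle, but if one exists it would be ensuring that the non-strict case $c=\rho(\mathcal{B})+1$ truly falls within the scope of Lemma \ref{lem3} rather than requiring the strict inequality used in Theorems \ref{thm1} and \ref{thm2}; this is precisely why the conclusion here asserts only existence of a nonnegative solution, not uniqueness or positivity.
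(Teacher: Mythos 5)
Your proposal is correct and follows the paper's own argument exactly: identify $\mathcal{A}-\mathcal{I}=\rho(\mathcal{B})\mathcal{I}-\mathcal{B}$ as an $M$-tensor, invoke Lemma \ref{lem3} to get a nonnegative solution of $(\mathcal{A}-\mathcal{I}){\bf x}^{m-1}={\bf b}$, and observe that on the nonnegative orthant this coincides with the TAVE. You simply spell out more explicitly than the paper does why $|{\bf x}|^{[m-1]}=\mathcal{I}{\bf x}^{m-1}$ for ${\bf x}\ge{\bf 0}$, which is a welcome addition rather than a deviation.
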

\begin{proof}
It follows from
 $$\mathcal{A}=c\mathcal{I}-\mathcal{B},\quad \mathcal{B}\ge 0, \quad c=\rho(\mathcal{B})+1 $$
 that $\mathcal{A}-\mathcal{I}$ is an $M$-tensor. By Lemma \ref{lem3}, there is ${\bf x}^*\ge {\bf 0}$ such that
 $$
 (\mathcal{A}-\mathcal{I})({\bf x}^*)^{m-1}={\bf b}.$$
 Thus, we have
 $$
 \mathcal{A}({\bf x}^*)^{m-1}-|{\bf x}^*|^{m-1}={\bf b}.$$
This completes the proof. 
\end{proof}

We next extend the result (i) about the AVE (\ref{ave}) to the TAVE
(\ref{TAVE}). Here, we assume that $m$ is even. We first introduce
the product of a tensor and a diagonal tensor.
\begin{definition}\label{def2}
Let $\mathcal{C}=(c_{i_1i_2\ldots i_m})\in T(m,n)$ and
$\mathcal{B}\in T(m,n)$ be a diagonal tensor with diagonal elements
$b_{i\ldots i}$. We denote $\mathcal{CB}=(a_{i_1i_2\ldots i_m})$
their product, whose elements are defined as
$$
a_{i_1i_2\ldots i_m}=c_{i_1i_2\ldots i_m}(b_{i_2\ldots
i_2})^{\frac{1}{m-1}}\cdots (b_{i_m\ldots i_m})^{\frac{1}{m-1}},
\quad 1\le i_1,\cdots, i_m\le n.$$
\end{definition}
Obviously, Definition \ref{def2} is well-defined due to the
assumption that $m$ is even.

By simplicity computation, we have the following proposition.
\begin{proposition}\label{prop1}
Let $\mathcal{C}=(c_{i_1i_2\ldots i_m})\in T(m,n)$ and ${\bf x}\in
R^n$. We have
$$(\mathcal{CD}){\bf x}^{m-1}= \mathcal{C}\left(\mathcal{D}{\bf x}^{m-1}\right).$$
\end{proposition}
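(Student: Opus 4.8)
The plan is to verify the identity componentwise by expanding both sides through the definition of the tensor--vector product together with Definition~\ref{def2}. First I would fix an index $i_1$ and write the $i_1$-th component of the left-hand side as
$$
\left((\mathcal{CD}){\bf x}^{m-1}\right)_{i_1}=\sum_{i_2=1}^n\cdots\sum_{i_m=1}^n (\mathcal{CD})_{i_1 i_2\ldots i_m}\, x_{i_2}\cdots x_{i_m}.
$$
Then I would substitute the entries of the product tensor supplied by Definition~\ref{def2}, namely $(\mathcal{CD})_{i_1 i_2\ldots i_m}=c_{i_1 i_2\ldots i_m}(d_{i_2\ldots i_2})^{1/(m-1)}\cdots(d_{i_m\ldots i_m})^{1/(m-1)}$, where the $d_{i\ldots i}\in\{1,-1\}$ are the diagonal entries of $\mathcal{D}$ and every off-diagonal entry of $\mathcal{D}$ vanishes.

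Next I would regroup each root factor with the matching coordinate of ${\bf x}$ by setting $z_j:=(d_{j\ldots j})^{1/(m-1)}x_j$ for $j=1,\ldots,n$. This turns the summand into $c_{i_1 i_2\ldots i_m}\,z_{i_2}\cdots z_{i_m}$, so the full sum collapses to $(\mathcal{C}{\bf z}^{m-1})_{i_1}$. It then remains to identify ${\bf z}$ with the object on the right-hand side: since $m$ is even, $m-1$ is odd, the real $(m-1)$-th root of $\pm1$ is well-defined, and $(d^{1/(m-1)})^{m-1}=d$ for $d=\pm1$. Consequently $z_j^{m-1}=d_{j\ldots j}x_j^{m-1}=(\mathcal{D}{\bf x}^{m-1})_j$, that is ${\bf z}^{[m-1]}=\mathcal{D}{\bf x}^{m-1}$, so $\mathcal{C}{\bf z}^{m-1}$ is exactly the vector denoted by $\mathcal{C}(\mathcal{D}{\bf x}^{m-1})$.

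I do not expect any genuine analytic difficulty here; the statement is an exercise in bookkeeping, which is why it is presented as a simple computation. The only place the hypothesis that $m$ is even enters — and the one point deserving care — is the well-definedness of the odd roots $(d_{i\ldots i})^{1/(m-1)}$ as real numbers together with the consistency check $(d^{1/(m-1)})^{m-1}=d$ for $d=\pm1$; this is precisely the remark already recorded after Definition~\ref{def2} that makes the product meaningful. The remaining subtlety is purely notational, namely fixing the reading of $\mathcal{C}(\mathcal{D}{\bf x}^{m-1})$ as $\mathcal{C}{\bf z}^{m-1}$ with ${\bf z}^{[m-1]}=\mathcal{D}{\bf x}^{m-1}$; once that convention is pinned down the equality follows at once from the term-by-term match established above.
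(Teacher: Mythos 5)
Your proposal is correct and follows essentially the same route as the paper: both expand the $i$-th component via Definition~\ref{def2} and pass through the auxiliary vector with components $(d_{j\ldots j})^{1/(m-1)}x_j$ (your ${\bf z}$ is exactly the paper's ${\bf u}=(\mathcal{D}{\bf x}^{m-1})^{[\frac{1}{m-1}]}$), relying on $m$ even to make the odd real roots well-defined. The only cosmetic difference is that you transform the left-hand side into the right-hand side, whereas the paper expands both sides to a common expression.
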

\begin{proof} Let us define a vector ${\bf u}\in R^n$ as
$$
{\bf u}=\left(\mathcal{D}{\bf x}^{m-1}\right)^{[\frac1{m-1}]}.$$
Then by some definitions introduced in Section 1, the
$i$th-component of the vector $\mathcal{C}(\mathcal{D}{\bf
x}^{m-1})$  can be written as
\begin{eqnarray}
\left(\mathcal{C}(\mathcal{D}{\bf x}^{m-1})\right)_i&=&(\mathcal{C}{\bf u}^{m-1})_i\nonumber \\
&=&\sum\limits_{i_2=1}^n\cdots \sum\limits_{i_m=1}^nc_{ii_2\ldots i_m}u_{i_2}\cdots u_{i_m}\label{equpro1}\\
&=& \sum\limits_{i_2=1}^n\cdots \sum\limits_{i_m=1}^nc_{ii_2\ldots i_m}\left(d_{i_2\ldots i_2}x_{i_2}^{m-1}\right)^{\frac1{m-1}}\cdots \left(d_{i_m\ldots i_m}x_{i_m}^{m-1}\right)^{\frac1{m-1}}\nonumber\\
&=&\sum\limits_{i_2=1}^n\cdots \sum\limits_{i_m=1}^nc_{ii_2\ldots
i_m}d_{i_2\ldots i_2}^{\frac1{m-1}}\cdots d_{i_m\ldots
i_m}^{\frac1{m-1}}x_{i_2}\cdots x_{i_m}.\nonumber
\end{eqnarray}
Let $\mathcal{A}=\mathcal{C}\mathcal{D}$. Then by Definition
\ref{def2}, the $i$th-component of the vector
$(\mathcal{C}\mathcal{D}){\bf x}^{m-1}$ can be written as
\begin{equation}\label{equpro2}
\begin{array}{rcl}
\left((\mathcal{C}\mathcal{D}){\bf x}^{m-1}\right)_i&=&\sum\limits_{i_2=1}^n\cdots \sum\limits_{i_m=1}^na_{ii_2\ldots i_m}x_{i_2}\cdots x_{i_m}\\
&=&\sum\limits_{i_2=1}^n\cdots \sum\limits_{i_m=1}^nc_{ii_2\ldots
i_m}d_{i_2\ldots i_2}^{\frac1{m-1}}\cdots d_{i_m\ldots
i_m}^{\frac1{m-1}}x_{i_2}\cdots x_{i_m}.
\end{array}
\end{equation}
Combining (\ref{equpro1}) and (\ref{equpro2}), we have
$$
(\mathcal{C}\mathcal{D}){\bf x}^{m-1}=\mathcal{C}(\mathcal{D}{\bf
x}^{m-1}).
$$
\end{proof}

It is easy to see that
\begin{equation}\label{equ3}
|{\bf x}|^{m-1}=\mathcal {D}{\bf x}^{m-1}
\end{equation}
holds for any vector ${\bf x}\in R^n$, because the $i$th-component
of the vectors $|{\bf x}|^{m-1}$ and $\mathcal {D}{\bf x}^{m-1}$ are
in the form of
$$
\left(|{\bf x}|^{m-1}\right)_i=|x_i|^{m-1},\quad \left(\mathcal
{D}{\bf x}^{m-1}\right)_i=d_{i\ldots i}x_i^{m-1}.
$$
Here the sign of $x_i$ is corresponded to the diagonal element $1$
or $-1$ of $\mathcal{D}$.

The following theorem is a generalization of the result (ii) from
AVE to TAVE.
\begin{theorem}\label{thm4}
Let $\mathcal{C}\in T(m,n)$, ${\bf b}\in R^n$ and
$\mathcal{A}=\mathcal{C}\mathcal{D}$. If the multilinear system of
equations
\begin{equation}\label{multi1}
(\mathcal{C}-\mathcal{I}){\bf z}^{m-1}={\bf b}, \quad {\bf z}\ge
{\bf 0}
\end{equation}
has a solution, then the tensor absolute value equation
$$
\mathcal{A}{\bf x}^{m-1}-|{\bf x}|^{m-1}={\bf b}
$$
also has a solution.
\end{theorem}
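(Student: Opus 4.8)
The plan is to imitate the proof of result (ii) in the matrix case, where one sets ${\bf x} = D{\bf z}$ to turn solvability of the AVE into solvability of the nonnegative linear system $(C-I){\bf z} = {\bf b}$; here the role of the diagonal sign matrix $D$ is played by the fixed diagonal $\pm 1$ tensor $\mathcal{D}$ appearing in $\mathcal{A} = \mathcal{CD}$. The two reduction tools I need are already in place: Proposition \ref{prop1} lets me push $\mathcal{D}$ inside, writing $\mathcal{A}{\bf x}^{m-1} = (\mathcal{CD}){\bf x}^{m-1} = \mathcal{C}(\mathcal{D}{\bf x}^{m-1})$, and the identity (\ref{equ3}) relates $|{\bf x}|^{m-1}$ to $\mathcal{D}{\bf x}^{m-1}$ once the signs are arranged correctly.

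First I would take a solution ${\bf z} \ge {\bf 0}$ of (\ref{multi1}) and define a candidate ${\bf x}$ componentwise by $x_i = d_{i\ldots i}\, z_i$, where $d_{i\ldots i} \in \{+1,-1\}$ are the diagonal entries of $\mathcal{D}$. The point of this choice is that the sign of $x_i$ then matches $d_{i\ldots i}$ (whenever $z_i>0$), so that $\mathcal{D}$ is exactly the sign tensor of ${\bf x}$ and (\ref{equ3}) applies to this particular ${\bf x}$. The heart of the argument is then to verify the two componentwise identities
$$\mathcal{D}{\bf x}^{m-1} = {\bf z}^{m-1}, \qquad |{\bf x}|^{m-1} = {\bf z}^{m-1}.$$
The first follows from $d_{i\ldots i}\, x_i^{m-1} = d_{i\ldots i}(d_{i\ldots i} z_i)^{m-1} = d_{i\ldots i}^{\,m}\, z_i^{m-1} = z_i^{m-1}$, using $d_{i\ldots i}^{\,m} = 1$ for even $m$; the second from $|x_i|^{m-1} = |z_i|^{m-1} = z_i^{m-1}$ since $z_i \ge 0$.

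With these identities the computation closes quickly. Proposition \ref{prop1} gives $\mathcal{A}{\bf x}^{m-1} = \mathcal{C}(\mathcal{D}{\bf x}^{m-1}) = \mathcal{C}({\bf z}^{m-1}) = \mathcal{C}{\bf z}^{m-1}$, where the last step uses $z_i \ge 0$ so that the componentwise $(m-1)$-th root implicit in the notation of Proposition \ref{prop1} returns ${\bf z}$. Subtracting $|{\bf x}|^{m-1} = {\bf z}^{m-1}$ then yields $\mathcal{A}{\bf x}^{m-1} - |{\bf x}|^{m-1} = (\mathcal{C}-\mathcal{I}){\bf z}^{m-1} = {\bf b}$ by hypothesis, so ${\bf x}$ solves the TAVE.

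The only real subtlety — and the step I would check most carefully — is the sign/parity bookkeeping, all of which rests on $m$ being even (equivalently $m-1$ odd). This is what makes $\mathcal{CD}$ well-defined in Definition \ref{def2}, what gives $d_{i\ldots i}^{\,m} = 1$ in the first identity, and what guarantees that the real $(m-1)$-th roots invoked when applying Proposition \ref{prop1} behave as expected. I would state the even-$m$ assumption explicitly and check the degenerate coordinates $z_i = 0$ (where $x_i = 0$ and both identities hold trivially, independently of the sign assigned to $d_{i\ldots i}$) so that no hidden sign error slips in.
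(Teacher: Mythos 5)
Your proposal is correct and follows essentially the same route as the paper: the paper also takes the solution ${\bf z}^*\ge{\bf 0}$ of (\ref{multi1}), defines ${\bf x}^*$ by $\mathcal{D}({\bf x}^*)^{m-1}=({\bf z}^*)^{m-1}$ (which, since $m-1$ is odd, is exactly your explicit choice $x_i=d_{i\ldots i}z_i$), and then applies Proposition \ref{prop1} and the identity (\ref{equ3}) to conclude. Your version merely spells out the sign and parity bookkeeping (that $\mathcal{D}$ is the sign tensor of ${\bf x}$, the $z_i=0$ case, and the even-$m$ assumption) that the paper leaves implicit.
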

\begin{proof} Let ${\bf z}^*$ be the solution of the multilinear system of equations (\ref{multi1}). Then we have
\begin{equation}\label{equthm}
(\mathcal{C}-\mathcal{I})({\bf z}^*)^{m-1}={\bf b},\quad {\bf
z}^*\ge {\bf 0}.
\end{equation}
Take
$$
\mathcal{D}({\bf x}^*)^{m-1}=({\bf z}^*)^{m-1}.$$  Then
(\ref{equthm}) can be rewritten as
$$
\mathcal{C}(\mathcal{D}({\bf x}^*)^{m-1})-\mathcal{D}({\bf
x}^*)^{m-1}={\bf b},
$$
which, together with Proposition \ref{prop1} and (\ref{equ3}),
implies that ${\bf x}^*$ is a solution of the tensor absolute value
equation
$$
\mathcal{A}{\bf x}^{m-1}-|{\bf x}|^{m-1}={\bf b}.
$$ Thus, we complete the proof. 
\end{proof}

We give an example to verify the above theorem.
\begin{example}
Let $\mathcal{C}\in T(4,2)$ with
$c_{1111}=c_{1222}=c_{2111}=c_{2222}=1$ and zero otherwise, and
${\bf b}=(8,8)^T$. Consider the multilinear system of equations
$$
(\mathcal{C}-\mathcal{I}){\bf z}^{m-1}={\bf b}.
$$
It is rewritten as
$$
z_1^3=8,\quad z_2^3=8.$$ This implies that ${\bf z}^*=(2,2)^T$ is a
solution of
$$
(\mathcal{C}-\mathcal{I}){\bf z}^{m-1}={\bf b},\quad {\bf z}\ge {\bf
0}.
$$
Let $\mathcal{D}\in T(4,2)$ be a diagonal tensor with $d_{1111}=1$
and $d_{2222}=-1$. Then we have $\mathcal{A}\in T(4,2)$ with
$a_{1111}=a_{2111}=1$, $a_{1222}=a_{2222}=-1$, and zero otherwise,
i.e., $\mathcal{A}=\mathcal{C}\mathcal{D}$. By Theorem \ref{thm4},
${\bf x}^*=(2,-2)^T$ is just a solution of the tensor absolute value
equation
\begin{equation}\label{equeg}
\mathcal{A}{\bf x}^{m-1}-|{\bf x}|^{m-1}={\bf b}.
\end{equation}
We now verify the conclusion. We rewrite (\ref{equeg}) as
$$
\left\{
\begin{array}{rcl}
x_1^3-x_2^3-|x_1|^3&=&8,\\
-x_2^3+x_1^3-|x_2|^3&=&8.\\
\end{array}\right.
$$
By simplicity computation,  the above equation has a solution
$x_1=2, x_2=-2$.
\end{example}

\section{Reformulation and algorithm}
In this section, we extend the result (i) from AVE to TAVE. We show
that the TAVE (\ref{TAVE}) is equivalent to a bi-multiliear program
and a generalized tensor complementarity problem. We first introduce
the following definition.
\begin{definition}
Let $\mathcal{A}\in T(m,n)$ and ${\bf x}, {\bf b}\in R^n$. Define
$$F({\bf x})=(\mathcal{A}+\mathcal{I}){\bf x}^{m-1}-{\bf b}, \quad G({\bf x})=(\mathcal{A}-\mathcal{I}){\bf x}^{m-1}-{\bf b}.$$
The generalized tensor complementarity problem is to find ${\bf
x}\in R^n$ satisfying
\begin{equation}\label{gtave}
F({\bf x})\ge {\bf 0},\quad G({\bf x})\ge {\bf 0},\quad F({\bf
x})^TG({\bf x})=0.
\end{equation}
We call the following nonlinear program as a bi-multiliear program:
\begin{equation}\label{biprog}
 0=\min\left\{F({\bf x})^TG({\bf x})|\, F({\bf x})\ge {\bf 0}, G({\bf x})\ge {\bf 0}\right\}
\end{equation}
\end{definition}

\begin{theorem}Let $\mathcal{A}\in T(m,n)$ and ${\bf b}\in R^n$. Then the TAVE (\ref{TAVE}) is equivalent to the generalized tensor complementarity problem (\ref{gtave}) and the bi-multilinear program (\ref{biprog}).
\end{theorem}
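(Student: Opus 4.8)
The plan is to reduce the entire statement to a componentwise identity and then treat the two equivalences in turn. The starting observation is that $\mathcal{I}{\bf x}^{m-1}={\bf x}^{[m-1]}$, so if I write $r_i=(\mathcal{A}{\bf x}^{m-1})_i-b_i$ for the $i$th residual, the $i$th components of $F$ and $G$ become $F_i=r_i+x_i^{m-1}$ and $G_i=r_i-x_i^{m-1}$. From these I read off the two identities $F_i+G_i=2r_i$ and $F_iG_i=r_i^2-(x_i^{m-1})^2$, and I also record $|x_i^{m-1}|=|x_i|^{m-1}$, valid for every real $x_i$ and every $m\ge 2$, so that no parity assumption on $m$ is needed.

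First I would dispatch the equivalence of the generalized tensor complementarity problem (\ref{gtave}) and the bi-multilinear program (\ref{biprog}). On the common feasible set $\{F({\bf x})\ge {\bf 0},\,G({\bf x})\ge {\bf 0}\}$ every summand $F_iG_i$ is nonnegative, hence $F({\bf x})^TG({\bf x})=\sum_i F_iG_i\ge 0$ there, with equality precisely when $F_iG_i=0$ for all $i$. Thus the minimum in (\ref{biprog}) equals $0$ and is attained exactly at the feasible points satisfying the complementarity condition $F({\bf x})^TG({\bf x})=0$, which is precisely the solution set of (\ref{gtave}). So the two reformulations share the same solutions, and it remains only to match them with the TAVE.

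For the main equivalence, TAVE $\Leftrightarrow$ (\ref{gtave}), I argue componentwise using the identities above. For the forward direction, if ${\bf x}$ solves (\ref{TAVE}) then $r_i=|x_i|^{m-1}$, whence $F_i=|x_i|^{m-1}+x_i^{m-1}\ge 0$ and $G_i=|x_i|^{m-1}-x_i^{m-1}\ge 0$, both because $|x_i|^{m-1}=|x_i^{m-1}|\ge \pm x_i^{m-1}$, while $F_iG_i=|x_i|^{2(m-1)}-(x_i^{m-1})^2=0$; summing over $i$ gives (\ref{gtave}). For the converse, if ${\bf x}$ solves (\ref{gtave}) then, as already noted, $F_iG_i=0$ for each $i$, i.e.\ $r_i^2=(x_i^{m-1})^2$, so $|r_i|=|x_i|^{m-1}$; and $F_i\ge 0$, $G_i\ge 0$ give $r_i\ge x_i^{m-1}$ and $r_i\ge -x_i^{m-1}$, hence $r_i\ge |x_i^{m-1}|=|x_i|^{m-1}$. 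Together these force $r_i=|x_i|^{m-1}$ for every $i$, which is exactly (\ref{TAVE}).

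All the steps are short, and the only point that demands care is the sign determination in the converse direction, where the two inequalities $F_i\ge 0$ and $G_i\ge 0$ must be combined with $|r_i|=|x_i|^{m-1}$ to rule out the spurious root $r_i=-|x_i|^{m-1}$. I expect this sign bookkeeping, rather than any genuine difficulty, to be the crux of the argument; invoking $|x_i^{m-1}|=|x_i|^{m-1}$ is what keeps the whole proof uniform in the parity of $m$.
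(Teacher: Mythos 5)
Your proof is correct and follows essentially the same route as the paper's: both rest on the identity $|x_i|^{m-1}=|x_i^{m-1}|$ together with the elementary equivalence $|v|\le w \Leftrightarrow w+v\ge 0,\ w-v\ge 0$ applied componentwise with $v=x_i^{m-1}$ and $w=(\mathcal{A}{\bf x}^{m-1})_i-b_i$, plus the observation that on the feasible set the objective $F({\bf x})^TG({\bf x})$ is a sum of nonnegative terms. If anything, your sign bookkeeping in the converse direction is more explicit than the paper's, which states the equivalence of $|{\bf x}|^{m-1}=\mathcal{A}{\bf x}^{m-1}-{\bf b}$ with $F({\bf x})^TG({\bf x})=0$ without spelling out that feasibility is what rules out the spurious root $r_i=-|x_i|^{m-1}$.
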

\begin{proof}
Clearly, the generalized tensor complementarity problem
(\ref{gtave}) is equivalent to the bi-multilinear program
(\ref{biprog}). That is, $(\ref{gtave})\Leftrightarrow
(\ref{biprog})$.

We only need to prove $(\ref{TAVE}) \Leftrightarrow (\ref{biprog})$.
In fact, $|{\bf x}|^{m-1}=|{\bf x}^{m-1}|$. Hence, we have
$$
|{\bf x}|^{m-1}\le \mathcal{A}{\bf x}^{m-1}-{\bf b}  \Leftrightarrow
(\mathcal{A}+\mathcal{I}){\bf x}^{m-1}-{\bf b}\ge {\bf 0},\quad
(\mathcal{A}-\mathcal{I}){\bf x}^{m-1}-{\bf b}\ge {\bf 0}.
$$
This implies that ${\bf x}$ is a feasible solution of
(\ref{biprog}). Since
$$|{\bf x}|^{m-1}= \mathcal{A}{\bf x}^{m-1}-{\bf b}  \Leftrightarrow \left((\mathcal{A}+\mathcal{I}){\bf x}^{m-1}-{\bf b}\right)^T\left((\mathcal{A}-\mathcal{I}){\bf x}^{m-1}-{\bf b}\right)=0,
$$
we have
$$ |{\bf x}|^{m-1}= \mathcal{A}{\bf x}^{m-1}-{\bf b}  \Leftrightarrow  0=\min\left\{F({\bf x})^TG({\bf x})|\, F({\bf x})\ge {\bf 0}, G({\bf x})\ge {\bf 0}\right\}.
$$
This completes the proof. 
\end{proof}

By the above theorem, in order to solve the TAVE (\ref{TAVE}), we
propose an algorithm for solving the generalized tensor
complementarity problem (\ref{gtave}). Using the Fischer-Burmeister
function $\phi_{FB}$, we can reformulate (\ref{gtave}) as the
following equation:
$$
H({\bf x})=\left(\begin{array}{c}
\phi_{FB}(F_1({\bf x}), G_1({\bf x}))\\
\vdots \\
\phi_{FB}(F_n({\bf x}), G_n({\bf x}))
\end{array}\right)={\bf 0}.
$$
Hence, ${\bf x}$ is a solution of (\ref{TAVE}) if and only if
$H({\bf x})={\bf 0}$. Moreover, $H({\bf x})$ is strongly semismooth
since the composition of strongly semismooth function is again
strongly semismooth \cite{miff26}, and according to the Jacobian
chain rule, we have the following result.
\begin{theorem}\label{thm6} Let $\mathcal{A}\in S(m,n)$. Then the function $H({\bf x})$ is strongly semismooth. Moreover, for any ${\bf x}\in R^n$, we have
$$\partial H({\bf x})\subseteq D_a({\bf x})JF({\bf x})+D_b({\bf x})JG({\bf x}),$$
where $D_a({\bf x})=diag(a_i({\bf x}))$ and  $D_b({\bf
x})=diag(b_i({\bf x}))$ are diagonal matrices in $R^{n\times n}$
with entries $(a_i({\bf x}),b_i({\bf x}))\in \partial
\phi_{FB}(F_i({\bf x}), G_i({\bf x}))$, where $\partial
\phi_{FB}(F_i({\bf x}), G_i({\bf x}))$ denotes the set $\partial
\phi_{FB}(a,b)$ with $(a,b)$ being replaced by $(F_i({\bf x}),
G_i({\bf x}))$, and  $JF({\bf x})$ and $JG({\bf x})$ are given by
$$
JF({\bf x})=(\mathcal{A}+\mathcal{I}){\bf x}^{m-2}, \quad JG({\bf
x})=(\mathcal{A}-\mathcal{I}){\bf x}^{m-2}.
$$
Here, for a tensor $\mathcal{T}=(t_{i_1\ldots i_m})\in T(m,n)$ and a
vector ${\bf x}\in R^n$, let $\mathcal{T}{\bf x}^{m-2}$ be a matrix
in $R^{n\times n}$ whose $(i,j)$-th component is defined by
$$
\left(\mathcal{T}{\bf x}^{m-2}\right)_{ij}=\sum_{i_3}^n\cdots
\sum_{i_m}^n t_{iji_3\ldots i_m}x_{i_3}\cdots x_{i_m}.
$$
\end{theorem}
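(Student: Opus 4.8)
The plan is to view $H$ as the componentwise composition of the scalar NCP-function $\phi_{FB}$ with the smooth polynomial maps $F$ and $G$, and then to read off both the regularity and the generalized Jacobian of $H$ from this structure. First I would settle strong semismoothness. The maps ${\bf x}\mapsto F({\bf x})$ and ${\bf x}\mapsto G({\bf x})$ are polynomial in the entries of ${\bf x}$, hence $C^\infty$ and in particular strongly semismooth. Since $\phi_{FB}$ is strongly semismooth (as recalled above, see \cite{fbf,minf}) and the composition of strongly semismooth maps is again strongly semismooth \cite{miff26}, each component $H_i({\bf x})=\phi_{FB}(F_i({\bf x}),G_i({\bf x}))$ is strongly semismooth; hence so is the vector-valued map $H$.

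Next I would compute the generalized Jacobian row by row and then assemble. Because $F_i$ and $G_i$ are continuously differentiable, Clarke's chain rule \cite{clark9} applied to the composition of the locally Lipschitz $\phi_{FB}$ with the $C^1$ map ${\bf x}\mapsto(F_i({\bf x}),G_i({\bf x}))$ gives
$$\partial H_i({\bf x})\subseteq\left\{a\,\nabla F_i({\bf x})^T+b\,\nabla G_i({\bf x})^T\;:\;(a,b)\in\partial\phi_{FB}(F_i({\bf x}),G_i({\bf x}))\right\},$$
the convexification being automatic since $\partial\phi_{FB}$ is already convex. I would then invoke Clarke's product inclusion for vector-valued maps, $\partial H({\bf x})\subseteq\partial H_1({\bf x})\times\cdots\times\partial H_n({\bf x})$, which says that every $V\in\partial H({\bf x})$ has its $i$th row in $\partial H_i({\bf x})$. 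Collecting rows, each such $V$ has $i$th row $a_i({\bf x})\nabla F_i({\bf x})^T+b_i({\bf x})\nabla G_i({\bf x})^T$ with $(a_i({\bf x}),b_i({\bf x}))\in\partial\phi_{FB}(F_i({\bf x}),G_i({\bf x}))$; writing $D_a({\bf x})=\mathrm{diag}(a_i({\bf x}))$ and $D_b({\bf x})=\mathrm{diag}(b_i({\bf x}))$, this is exactly $V=D_a({\bf x})JF({\bf x})+D_b({\bf x})JG({\bf x})$, the claimed inclusion.

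It then remains to identify the Jacobians $JF({\bf x})$ and $JG({\bf x})$ with the tensor--vector products $(\mathcal{A}+\mathcal{I}){\bf x}^{m-2}$ and $(\mathcal{A}-\mathcal{I}){\bf x}^{m-2}$. For a tensor $\mathcal{T}$ I would differentiate $(\mathcal{T}{\bf x}^{m-1})_i=\sum_{i_2,\ldots,i_m}t_{ii_2\ldots i_m}x_{i_2}\cdots x_{i_m}$ with respect to $x_j$. Each of the $m-1$ factors $x_{i_2},\ldots,x_{i_m}$ contributes one term, and the hypothesis $\mathcal{A}\in S(m,n)$ (so that $\mathcal{A}\pm\mathcal{I}$ is symmetric) is exactly what is needed: symmetry makes these $m-1$ terms equal, each reducing to the matrix entry $(\mathcal{T}{\bf x}^{m-2})_{ij}=\sum_{i_3,\ldots,i_m}t_{iji_3\ldots i_m}x_{i_3}\cdots x_{i_m}$, so that the partial derivatives assemble into the matrix $(m-1)\,\mathcal{T}{\bf x}^{m-2}$. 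Taking $\mathcal{T}=\mathcal{A}+\mathcal{I}$ and $\mathcal{T}=\mathcal{A}-\mathcal{I}$ then yields the stated $JF({\bf x})$ and $JG({\bf x})$, up to the multiplicative constant $m-1$ coming from the degree of the tensor product.

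I expect the generalized-Jacobian step, rather than the differentiation, to be the delicate part. Clarke's chain rule only guarantees an inclusion in general, so I must rely on the $C^1$ regularity of $F_i,G_i$ to pass to the clean product form, and I must treat carefully the nonsmooth point where $(F_i({\bf x}),G_i({\bf x}))=(0,0)$, at which $\partial\phi_{FB}$ is genuinely set-valued and supplies the free parameters $(\xi,\varsigma)$ recorded earlier. Once symmetry is used to merge the repeated terms, the tensor differentiation is routine.
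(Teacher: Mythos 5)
Your proposal is correct and follows exactly the route the paper itself takes: the paper offers no detailed proof of this theorem, only the one-sentence justification preceding it (strong semismoothness of compositions via \cite{miff26}, plus the Jacobian chain rule from \cite{clark9}), and your argument is a faithful, fully worked-out version of that sketch. The one substantive point you raise is real: for a symmetric tensor $\mathcal{T}$ the differentiation of $(\mathcal{T}{\bf x}^{m-1})_i$ produces $m-1$ equal terms, so the true Jacobian is $(m-1)\,\mathcal{T}{\bf x}^{m-2}$, and the theorem as printed omits this factor of $m-1$ in $JF({\bf x})$ and $JG({\bf x})$; this is a defect of the statement (harmless for the structural inclusion, since the constant can be absorbed, but it matters if $Q^k$ is computed literally from the printed formulas in Algorithm 4.1). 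You are right that the symmetry hypothesis $\mathcal{A}\in S(m,n)$ is used precisely to merge those $m-1$ terms, and right to single out the degenerate indices with $(F_i({\bf x}),G_i({\bf x}))=(0,0)$ as the only place where $\partial\phi_{FB}$ is genuinely set-valued.
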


In order to propose an algorithm for the solution of $H({\bf
x})={\bf 0}$, we define a merit function as
$$
\Psi({\bf x})=\frac12\|H({\bf x})\|^2.
$$
We present some properties of the merit function, which can be
obtained by \cite[Theorem 2.2.4 and Theorem 2.6.6]{clark9}.
\begin{theorem}
Let $\mathcal{A}\in S(m,n)$. Then the merit function $\Psi({\bf x})$
is continuously differentiable with $$\nabla \Psi({\bf x})=Q^TH({\bf
x})$$ for any $Q\in \partial H({\bf x})$.
\end{theorem}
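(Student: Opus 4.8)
The plan is to treat $\Psi$ as the composition of the smooth outer map $\theta({\bf y})=\frac12\|{\bf y}\|^2$ with the locally Lipschitz map $H$, and then to exploit the structure of the Fischer-Burmeister function so that the resulting Clarke generalized gradient collapses to a single point. First I would note that Theorem \ref{thm6} already gives that $H$ is strongly semismooth, hence locally Lipschitz, so that $\Psi({\bf x})=\frac12\|H({\bf x})\|^2=\theta(H({\bf x}))$ is locally Lipschitz as well. Since $\theta$ is continuously differentiable with $\nabla\theta({\bf y})={\bf y}$, the Clarke chain rule \cite[Theorem 2.6.6]{clark9} applies and yields the inclusion
$$
\partial\Psi({\bf x})\subseteq\left\{Q^TH({\bf x}):\,Q\in\partial H({\bf x})\right\}.
$$
At this point $\partial\Psi({\bf x})$ is only known to lie inside a possibly set-valued object, because $\partial H({\bf x})$ may contain more than one matrix.

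The crucial step is to show that the product $Q^TH({\bf x})$ takes the \emph{same} value for every $Q\in\partial H({\bf x})$. Using the representation from Theorem \ref{thm6}, the $i$-th row of any such $Q$ equals $a_i({\bf x})\,(JF({\bf x}))_{i\cdot}+b_i({\bf x})\,(JG({\bf x}))_{i\cdot}$ with $(a_i({\bf x}),b_i({\bf x}))\in\partial\phi_{FB}(F_i({\bf x}),G_i({\bf x}))$, so that
$$
Q^TH({\bf x})=\sum_{i=1}^nH_i({\bf x})\left[a_i({\bf x})\,(JF({\bf x}))_{i\cdot}^T+b_i({\bf x})\,(JG({\bf x}))_{i\cdot}^T\right].
$$
From the explicit form of $\partial\phi_{FB}$ recorded earlier in the paper, the pair $(a_i({\bf x}),b_i({\bf x}))$ is uniquely determined whenever $(F_i({\bf x}),G_i({\bf x}))\ne(0,0)$, since $\partial\phi_{FB}$ is single-valued off the origin. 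At an index where $(F_i({\bf x}),G_i({\bf x}))=(0,0)$ the multivaluedness of $\partial\phi_{FB}(0,0)$ is harmless, because there $H_i({\bf x})=\phi_{FB}(0,0)=0$ annihilates the entire $i$-th summand. Every ambiguous choice is thus multiplied by a zero factor, and $Q^TH({\bf x})$ is independent of the selected $Q$.

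It then follows that $\partial\Psi({\bf x})$ is a singleton for every ${\bf x}\in R^n$. By Clarke's characterization \cite[Theorem 2.2.4]{clark9}, a locally Lipschitz function whose generalized gradient is single-valued on an open set is continuously differentiable there, its derivative being that single element. This delivers $\nabla\Psi({\bf x})=Q^TH({\bf x})$ for every $Q\in\partial H({\bf x})$, which is exactly the claim.

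I expect the main obstacle to be the second paragraph: making rigorous that the only source of set-valuedness in $\partial H({\bf x})$ is concentrated at the indices where $\phi_{FB}$ is evaluated at the origin, and checking that $\phi_{FB}(0,0)=0$ precisely cancels that ambiguity inside the product $Q^TH({\bf x})$. The first and third paragraphs are essentially bookkeeping resting on the two cited Clarke theorems and on the strong semismoothness established in Theorem \ref{thm6}.
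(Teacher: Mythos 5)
Your argument is correct and follows exactly the route the paper intends: the paper gives no written proof, merely citing Clarke's Theorems 2.2.4 and 2.6.6, which are precisely the chain rule and the singleton-gradient criterion your proof rests on. Your second paragraph --- observing that the multivaluedness of $\partial\phi_{FB}$ occurs only at indices where $H_i({\bf x})=\phi_{FB}(0,0)=0$, so that $Q^TH({\bf x})$ is independent of $Q$ --- correctly supplies the one substantive detail the paper leaves implicit.
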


We are now in the position to propose a Levenberg-Marquardt-type
algorithm to solve the semismooth system of equations $H({\bf
x})={\bf 0}$, which is an extension of the nonsmooth inexact
Levenberg-Marquardt-type method in \cite{FK}. To ensure global
convergence, a line search is performed to minimize the smooth merit
function $\Psi$. Because the problem with data in a structure of
tensor is large scale, and the inexact version is more suited to the
large-scale case \cite{FK}, we have the following algorithm.

\begin{algorithm}\label{alg4.1}
{\rm (Inexact Levenberg-Marquardt-type method)}

\alglist
\item[{\bf Step 0.}] Given a starting vector ${\bf x}^0\in R^{n}$ and some scales $p>2$, $0<\beta<1/2$,  $\rho>0$, $ \epsilon\ge 0$. Set $k:=0$.

\item[{\bf Step 1.}]  If $\|H({\bf x}^k)\|\leq\epsilon$, stop. Otherwise,  compute $Q^k\in \partial H({\bf x}^k)$.

\item[{\bf Step 2.}] Find a solution ${\bf d}^k$ satisfying
\begin{equation}\label{step2}\left((Q^k)^TQ^k+\mu_kI\right){\bf d}=-(Q^k)^TH({\bf x}^k)+{\bf
r}^k,\end{equation} where $\mu_k\ge 0$ is the Levenberg-Marquardt
parameter. If the condition
$$\nabla\Psi({\bf x}^k)^T{\bf d}^k\le -\rho\|{\bf d}^k\|^p$$ is not satisfied, set $${\bf d}^k=-\nabla\Psi({\bf x}^k).$$

\item[{\bf Step 3.}] Find the smallest integer $i^k\in \{0,1,2,\ldots \}$ such
that $t_k=2^{-i^k}$ and
  $$\Psi({\bf x}^k+ t_k {\bf d}^k)\leq\Psi({\bf x}^k)+\beta t_k\nabla\Psi({\bf x}^k)^T{\bf d}^k. $$

\item[{\bf Step 4.}] Set ${\bf x}^{k+1}={\bf x}^k+t_k {\bf d}^k$, $k:=k+1$, and go to Step 1.
\end{list}
\end{algorithm}

In what follows, we analyze  the global convergence of Algorithm 1.
We shall assume that Algorithm 1 produce an infinite sequence
$\{{\bf x}^k\}$. By \cite[Theorem 15 and Theorem 16]{FK}, we
immediately obtain the following theorems.
\begin{theorem}\label{thmf}
 Assume that the sequence $\{\mu_k\}$ is bounded and that the sequence $\{{\bf r}^k\}$ satisfies
 $$\|{\bf r}^k\|\leq \alpha_k\|\nabla\Psi({\bf x}^k)\|,$$ where $\{\alpha_k\}$ is a sequence of  numbers with
 $0<\alpha_k <1$  and $\alpha_k\to 0$ as $k\to \infty$. Then each accumulation point of  $\{{\bf x}^k\}$  is a stationary point of $\Psi$.
\end{theorem}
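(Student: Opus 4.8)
The plan is to recognize Theorem~\ref{thmf} as a standard global convergence statement for a line-search descent method and to prove it by checking that Algorithm~\ref{alg4.1} falls within the framework of the cited convergence results \cite[Theorems 15, 16]{FK}; equivalently, one may run the classical Armijo-descent argument directly, which I outline below. Two facts proved earlier in the paper form the backbone. First, the merit function $\Psi$ is continuously differentiable with $\nabla\Psi({\bf x})=Q^TH({\bf x})$ for every $Q\in\partial H({\bf x})$; in particular the right-hand side of the subproblem (\ref{step2}) is $-(Q^k)^TH({\bf x}^k)+{\bf r}^k=-\nabla\Psi({\bf x}^k)+{\bf r}^k$, and ${\bf x}^*$ is stationary for $\Psi$ exactly when $\nabla\Psi({\bf x}^*)={\bf 0}$. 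Second, by the construction in Step~2 each ${\bf d}^k$ is a descent direction: either it satisfies $\nabla\Psi({\bf x}^k)^T{\bf d}^k\le-\rho\|{\bf d}^k\|^p$, or it is reset to $-\nabla\Psi({\bf x}^k)$, for which $\nabla\Psi({\bf x}^k)^T{\bf d}^k=-\|\nabla\Psi({\bf x}^k)\|^2<0$ whenever $\nabla\Psi({\bf x}^k)\ne{\bf 0}$.

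First I would check that the Armijo backtracking in Step~3 terminates after finitely many halvings, which holds because ${\bf d}^k$ is a descent direction and $\Psi$ is $C^1$; the iteration is therefore well defined and $\{\Psi({\bf x}^k)\}$ is strictly decreasing. Bounded below by $0$, this sequence converges, so $\Psi({\bf x}^k)-\Psi({\bf x}^{k+1})\to0$, and feeding this into the acceptance test $\Psi({\bf x}^{k+1})\le\Psi({\bf x}^k)+\beta t_k\nabla\Psi({\bf x}^k)^T{\bf d}^k$ gives $t_k\,\nabla\Psi({\bf x}^k)^T{\bf d}^k\to0$. Let ${\bf x}^*$ be an accumulation point with ${\bf x}^{k_j}\to{\bf x}^*$, and suppose for contradiction that $\nabla\Psi({\bf x}^*)\ne{\bf 0}$.

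The argument then splits on the step sizes along the subsequence. If $\liminf_j t_{k_j}>0$, then $\nabla\Psi({\bf x}^{k_j})^T{\bf d}^{k_j}\to0$, and the gradient-related property of the directions forces $\nabla\Psi({\bf x}^{k_j})\to{\bf 0}$, contradicting $\nabla\Psi({\bf x}^*)\ne{\bf 0}$ by continuity. If instead $t_{k_j}\to0$, then for large $j$ the trial step $2t_{k_j}$ was rejected, so $\Psi({\bf x}^{k_j}+2t_{k_j}{\bf d}^{k_j})>\Psi({\bf x}^{k_j})+2\beta t_{k_j}\nabla\Psi({\bf x}^{k_j})^T{\bf d}^{k_j}$; dividing by $2t_{k_j}\|{\bf d}^{k_j}\|$, invoking the mean value theorem, passing to a subsequence on which the unit vectors ${\bf d}^{k_j}/\|{\bf d}^{k_j}\|$ converge to some $\bar{\bf d}$ and using that $t_{k_j}\|{\bf d}^{k_j}\|\to0$ (directions bounded above), the continuity of $\nabla\Psi$ yields $\nabla\Psi({\bf x}^*)^T\bar{\bf d}\ge\beta\,\nabla\Psi({\bf x}^*)^T\bar{\bf d}$. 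Since $\beta<1$ this gives $\nabla\Psi({\bf x}^*)^T\bar{\bf d}\ge0$, which contradicts the uniform angle condition $\nabla\Psi({\bf x}^*)^T\bar{\bf d}\le-\delta\|\nabla\Psi({\bf x}^*)\|<0$ inherited in the limit.

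The crux of the proof, and the only place where the two standing hypotheses are used, is the verification that $\{{\bf d}^k\}$ is genuinely gradient-related along the subsequence: bounded above, bounded below relative to $\|\nabla\Psi({\bf x}^{k_j})\|$, and satisfying a uniform angle condition $\nabla\Psi({\bf x}^k)^T{\bf d}^k\le-\delta\|\nabla\Psi({\bf x}^k)\|\,\|{\bf d}^k\|$. Writing ${\bf d}^k=M_k^{-1}(-\nabla\Psi({\bf x}^k)+{\bf r}^k)$ with $M_k=(Q^k)^TQ^k+\mu_kI$, the boundedness of $\{\mu_k\}$, together with the boundedness of $Q^k$ on the compact set carrying the subsequence, bounds the spectrum of $M_k$ from above; the residual control $\|{\bf r}^k\|\le\alpha_k\|\nabla\Psi({\bf x}^k)\|$ with $\alpha_k\to0$ then makes the inexact Levenberg--Marquardt direction asymptotically a true descent direction, giving $\nabla\Psi({\bf x}^k)^T{\bf d}^k\le-c\|\nabla\Psi({\bf x}^k)\|^2$ for some $c>0$ and all large $k$. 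I expect this uniform gradient-relatedness, rather than the essentially routine monotonicity and line-search bookkeeping, to be the main obstacle; securing the lower eigenvalue bound on $M_k$ needed for the angle condition is the delicate point. A final subtlety is that $H$ is only strongly semismooth, but this is harmless, since the argument needs $\Psi$ (not $H$) to be continuously differentiable, which is exactly what the preceding theorem provides.
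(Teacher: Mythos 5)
The paper gives no argument of its own for Theorem \ref{thmf}: it observes that Algorithm \ref{alg4.1} is precisely the nonsmooth inexact Levenberg--Marquardt method of Facchinei and Kanzow and invokes \cite[Theorem 15]{FK} directly, which is the first of the two routes you mention. The direct Armijo argument you actually outline, however, has a genuine gap exactly at the point you flag as the crux. To obtain gradient-relatedness, i.e.\ $\nabla\Psi({\bf x}^k)^T{\bf d}^k\le-c\|\nabla\Psi({\bf x}^k)\|^2$ together with $\|{\bf d}^k\|\le C\|\nabla\Psi({\bf x}^k)\|$, from ${\bf d}^k=M_k^{-1}\left(-\nabla\Psi({\bf x}^k)+{\bf r}^k\right)$ with $M_k=(Q^k)^TQ^k+\mu_kI$, you need $\lambda_{\min}(M_k)$ bounded away from zero; but the hypotheses only bound $\{\mu_k\}$ from above, $\mu_k$ may be zero or tend to zero, and $(Q^k)^TQ^k$ may be singular (the paper explicitly notes this after the algorithm). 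Without that lower bound the perturbation term $\nabla\Psi({\bf x}^k)^TM_k^{-1}{\bf r}^k$ is of order $\alpha_k\,\|M_k^{-1}\|\,\|\nabla\Psi({\bf x}^k)\|^2$ and is not dominated by $-\lambda_{\max}(M_k)^{-1}\|\nabla\Psi({\bf x}^k)\|^2$ merely because $\alpha_k\to0$; one cannot even guarantee descent of the raw Levenberg--Marquardt step. Moreover, the accepted directions only satisfy $\nabla\Psi({\bf x}^k)^T{\bf d}^k\le-\rho\|{\bf d}^k\|^p$ with $p>2$, which for small $\|{\bf d}^k\|$ implies no uniform angle condition of the form $\nabla\Psi({\bf x}^k)^T{\bf d}^k\le-\delta\|\nabla\Psi({\bf x}^k)\|\,\|{\bf d}^k\|$, so your limiting contradiction in the $t_{k_j}\to0$ case does not close.

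The correct route, and the one the cited result follows, exploits the safeguard in Step 2 rather than trying to make the Levenberg--Marquardt direction gradient-related. Split the subsequence converging to ${\bf x}^*$ according to whether ${\bf d}^k=-\nabla\Psi({\bf x}^k)$ (there your classical Armijo argument applies verbatim, since the angle condition holds trivially) or ${\bf d}^k$ passes the test $\nabla\Psi({\bf x}^k)^T{\bf d}^k\le-\rho\|{\bf d}^k\|^p$. In the latter case, $t_{k_j}\nabla\Psi({\bf x}^{k_j})^T{\bf d}^{k_j}\to0$ combined with the test yields ${\bf d}^{k_j}\to{\bf 0}$ (the $t_{k_j}\to0$ subcase being handled via the rejected-step inequality as in \cite{FK}); one then passes to the limit in the defining relation $\bigl((Q^{k_j})^TQ^{k_j}+\mu_{k_j}I\bigr){\bf d}^{k_j}=-\nabla\Psi({\bf x}^{k_j})+{\bf r}^{k_j}$. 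The left-hand side tends to ${\bf 0}$ because $\partial H$ is locally bounded near ${\bf x}^*$ and $\{\mu_k\}$ is bounded (only an \emph{upper} bound on $M_k$ is needed), while $\|{\bf r}^{k_j}\|\le\alpha_{k_j}\|\nabla\Psi({\bf x}^{k_j})\|\to0$; hence $\nabla\Psi({\bf x}^*)={\bf 0}$. This is where the two standing hypotheses actually enter, and no lower eigenvalue bound on $M_k$ is required.
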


\begin{theorem}Let the assumptions of Theorem \ref{thmf} hold. If one of the accumulation points of
$\{{\bf x}^k\}$, denoted ${\bf x}^*$, is an isolated solution of the
TAVE (\ref{TAVE}), then $$\lim_{k\to \infty} {\bf x}^k={\bf x}^*.$$
\end{theorem}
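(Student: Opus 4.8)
The plan is to reduce the claim to a standard topological fact: a sequence that possesses an \emph{isolated} accumulation point and whose consecutive differences $\|{\bf x}^{k+1}-{\bf x}^k\|$ tend to zero must converge to that accumulation point. I would verify the two hypotheses of this fact in turn.

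First I would show that ${\bf x}^*$ is an isolated accumulation point. The line search in Step 3 forces $\Psi({\bf x}^{k+1})\le \Psi({\bf x}^k)$, because the search direction satisfies $\nabla\Psi({\bf x}^k)^T{\bf d}^k\le 0$ in both branches of Step 2 and $t_k>0$; hence $\{\Psi({\bf x}^k)\}$ is nonincreasing and bounded below by $0$, so it converges. Since ${\bf x}^*$ solves the TAVE we have $H({\bf x}^*)={\bf 0}$, i.e. $\Psi({\bf x}^*)=0$, and evaluating the limit of $\Psi$ along a subsequence ${\bf x}^{k_j}\to {\bf x}^*$ together with continuity of $\Psi$ gives $\lim_k\Psi({\bf x}^k)=0$. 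Consequently every accumulation point ${\bf y}$ satisfies $\Psi({\bf y})=0$, i.e. $H({\bf y})={\bf 0}$, so every accumulation point is a solution of the TAVE. Because ${\bf x}^*$ is an \emph{isolated} solution, some neighborhood of it contains no other solution and therefore no other accumulation point, which makes ${\bf x}^*$ an isolated accumulation point.

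Second I would establish $\|{\bf x}^{k+1}-{\bf x}^k\|=t_k\|{\bf d}^k\|\to 0$. Telescoping the sufficient-decrease inequality of Step 3 and using the descent estimate of Step 2, together with the convergence of $\{\Psi({\bf x}^k)\}$, yields the summability $\sum_k t_k\|{\bf d}^k\|^p<\infty$ (and $\sum_k t_k\|{\bf d}^k\|^2<\infty$ over the iterations where the steepest-descent direction is used). I would then combine this with the boundedness of $\{\mu_k\}$, the inexactness bound $\|{\bf r}^k\|\le \alpha_k\|\nabla\Psi({\bf x}^k)\|$ with $\alpha_k\to 0$, and the vanishing of $\nabla\Psi$ at accumulation points to control the steps ${\bf d}^k$ and conclude $t_k\|{\bf d}^k\|\to 0$; this is exactly the estimate carried out in \cite[Theorem 15]{FK} for the inexact Levenberg--Marquardt framework that underlies Theorem \ref{thmf}.

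With both hypotheses in hand, the topological fact gives $\lim_{k\to\infty}{\bf x}^k={\bf x}^*$. I expect the second step to be the main obstacle: passing from the summability $\sum_k t_k\|{\bf d}^k\|^p<\infty$ to the \emph{uniform} conclusion $t_k\|{\bf d}^k\|\to 0$ over all iterations, and not merely along the subsequence converging to ${\bf x}^*$. Without it the iterates could in principle oscillate between ${\bf x}^*$ and another, separated solution while keeping $\Psi$ monotone, so ruling this out is precisely where the fine estimates of the cited inexact Levenberg--Marquardt analysis are needed; the remainder is bookkeeping with the monotone merit function and the continuity of $\Psi$.
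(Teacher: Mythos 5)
Your proposal is correct and follows essentially the same route as the paper, which gives no independent argument here but simply invokes \cite[Theorem 16]{FK}; your outline is precisely the content of that citation (all accumulation points are solutions by monotonicity of $\Psi$ and $\Psi({\bf x}^*)=0$, isolatedness then upgrades subsequential to full convergence via the vanishing of consecutive differences). The one step you flag as the main obstacle is in fact not one: telescoping the Armijo inequality gives $\sum_k t_k|\nabla\Psi({\bf x}^k)^T{\bf d}^k|<\infty$, hence $t_k|\nabla\Psi({\bf x}^k)^T{\bf d}^k|\to 0$ over \emph{all} iterations, and since $t_k\le 1$ this yields $(t_k\|{\bf d}^k\|)^p\le t_k\|{\bf d}^k\|^p\le \rho^{-1}t_k|\nabla\Psi({\bf x}^k)^T{\bf d}^k|\to 0$ in the Levenberg--Marquardt branch and $(t_k\|{\bf d}^k\|)^2\le t_k\|{\bf d}^k\|^2\to 0$ in the steepest-descent branch; alternatively, the Mor\'e--Sorensen lemma only requires $\|{\bf x}^{k+1}-{\bf x}^k\|\to 0$ along subsequences converging to ${\bf x}^*$, where it follows even more directly from $\nabla\Psi({\bf x}^k)\to{\bf 0}$.
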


In the implementation of Algorithm \ref{alg4.1}, the computational
most intensive part is the approximation solution of system
(\ref{step2}) with ${\bf r}^k={\bf 0} ~~~\forall k$. We note that
the system is always solvable. In fact, if $\mu_k>0$, the matrix
$(Q^k)^TQ^k+\mu_kI$ is symmetric positive definite and hence system
(\ref{step2}) is surely solvable. If $\mu_k=0$, the matrix
$(Q^k)^TQ^k+\mu_kI$ reduces to $(Q^k)^TQ^k$, which is guaranteed to
be only positive semidefinite. However, in this case, (\ref{step2})
reduces to the normal gradient equation $Q^k{\bf d}=-H({\bf x}^k)$,
is therefore solvable. We now have to specify which element $Q^k\in
\partial H({\bf x}^k)$ we select at the $k$-th iteration. By Theorem
\ref{thm6}, we have that an element of $\partial H({\bf x}^k)$ can
be obtained in the following way. Let
$$
\Lambda =\{i:\, F_i({\bf x}^k)=0=G_i({\bf x}^k)\}
$$
be the set of ``degenerate indices'' and define ${\bf z}\in R^n$ to
be a vector whose components $z_i$ are $1$ if $i\in \Lambda$ and $0$
otherwise. Then, the matrix $Q^k$ defined by
$$
Q^k=A({\bf x}^k)JF({\bf x}^k)+B({\bf x}^k)JG({\bf x}^k),
$$
where $A$ and $B$ are $n\times n$ diagonal matrices whose $i$-th
diagonal elements are given, respectively, by
$$
A_{ii}({\bf x}^k)=\begin{cases}1-\frac{F_i({\bf
x}^k)}{\sqrt{F_i^2({\bf x}^k)+G_i^2({\bf x}^k)}} & \mbox{if
$i\not\in \Lambda$},\\
1-\frac{\nabla F_i({\bf x}^k)^Tz}{\sqrt{(\nabla F_i({\bf
x}^k)^Tz)^2+(\nabla G_i({\bf x}^k)^Tz)^2}} & \mbox{if $i\in
\Lambda$},
\end{cases}
$$
and by
$$
B_{ii}({\bf x}^k)=\begin{cases}1-\frac{G_i({\bf
x}^k)}{\sqrt{F_i^2({\bf x}^k)+G_i^2({\bf x}^k)}} & \mbox{if
$i\not\in \Lambda$},\\
1-\frac{\nabla G_i({\bf x}^k)^Tz}{\sqrt{(\nabla F_i({\bf
x}^k)^Tz)^2+(\nabla G_i({\bf x}^k)^Tz)^2}} & \mbox{if $i\in
\Lambda$},
\end{cases}
$$
belongs to $\partial H({\bf x}^k)$. In the next section, we compute
$Q^k$ as the formulation.

\section{Numerical results}

In this section, we present the numerical performance of Algorithm
\ref{alg4.1} for the TAVE (\ref{TAVE}). All codes were written by
using Matlab Version R2015b and Tensor Toolbox Version 2.6
\cite{bader2}. The numerical experiments were done on a laptop with
an Intel Core i7-4720HQ CPU (2.6GHz) and RAM of 7.89GB.

In the implementation of Algorithm \ref{alg4.1}, we set
$\varepsilon=10^{-6},\rho=10^{-10},p=2.1,\beta=10^{-4}$ and the
Levenberg-Marquardt parameter $\mu_k=0.3\,\,\forall k\in N$. We also
set a maximum iteration steps for the algorithm, $i.e.$,
$N_{max}=300$.

The first numerical experiment focuses on the behaviour of
algorithm's iteration. We generate a random symmetric nonnegative
tensor $\mathcal{A}\in S_{6,8}$ and a random vector ${\bf x}^*\in
R^8$. All entries of $\mathcal{A}$ and ${\bf x}^*$ are uniform
random numbers in the interval $[0,1]$. We calculate ${\bf
b}=\mathcal{A}{{\bf x}^*}^{m-1}-|{\bf x}^*|^{m-1}$ in order to make
TAVE have at least one solution. Then we use Algorithm \ref{alg4.1}
to solve TAVE: $\mathcal{A}{\bf x}^{m-1}-|{\bf x}|^{m-1}={\bf b}$,
with a random initial point chosen randomly from $[0,1]^8$ which is
shown as $x^0$ in table \ref{tab:1}. The iteration of Algorithm
\ref{alg4.1} is shown in Table \ref{tab:1}. From the table,
$\|H({\bf x}^k)\|$ tends to $0$ as the number of iteration $k$
increases. And $\|\nabla \Psi({\bf x}^k)\|$ also tends to 0 except
that it increases from ${\bf k}=2$ to ${\bf k}=4$. This shows that
$\|\nabla \Psi({\bf x}^k)\|$ does converge to 0 but not converge
monotonically when the algorithm converges.

\begin{table}[h]
\centering
\caption{Iterations of Algorithm \ref{alg4.1} for a random tensor
$\mathcal{A}\in S_{6,8}$ and corresponding ${\bf b}$}
\label{tab:1}       
\scalebox{0.9}{
\begin{tabular}{cccc}
\hline\noalign{\smallskip}
${\bf k}$  & ${\bf x}^k$ & $\|H({\bf x}^k)\|$& $\|\nabla \Psi({\bf x}^k)\|$ \\
\noalign{\smallskip}\hline\noalign{\smallskip}
0 & $($0.8143, 0.2435, 0.9293, 0.3500, 0.1966, 0.2511, 0.6160, 0.4733$)^\mathrm{T}$ & 562.2589  & 1500602.8826\\
1 & $($0.7407, 0.2435, 0.6880, 0.3545, 0.1072, 0.3670, 0.4555, 0.3271$)^\mathrm{T}$ & 148.1702 & 203193.6101\\
2 & $($0.4542, 0.4477, 0.4349, 0.4348 -0.2944, 0.7819, 0.4209, 0.3007$)^\mathrm{T}$ & 25.5263 & 23486.6536\\
3 & $($1.0757, 0.3147, 0.2655, 0.4343 -0.2368, 0.4908, 0.1690, 0.3781$)^\mathrm{T}$ & 20.2932 & 48494.4079\\
4 & $($1.2158, 0.3379, 0.4481, 0.5825 -0.2075, 0.1750, 0.3290, 0.0197$)^\mathrm{T}$ & 18.6526 & 49990.1630\\
5 & $($0.8865, 0.3812, 0.3176, 0.5179 -0.3075, 0.3684, 0.4486, 0.2840$)^\mathrm{T}$ & 10.0354 & 20912.3905\\
6 & $($0.8742, 0.2928, 0.3744, 0.5308 -0.3895, 0.5269, 0.2838, 0.3997$)^\mathrm{T}$ & 2.9292 & 3867.9206\\
7 & $($0.8798, 0.2888, 0.3406, 0.6301 -0.3722, 0.4799, 0.3198, 0.3293$)^\mathrm{T}$ & 1.3213 & 1522.0099\\
8 & $($0.8664, 0.2829, 0.3003, 0.6746 -0.3890, 0.4936, 0.3325, 0.3355$)^\mathrm{T}$ & 0.7455 & 1084.3075\\
9 & $($0.8684, 0.2850, 0.2737, 0.6914 -0.3985, 0.4960, 0.3394, 0.3411$)^\mathrm{T}$ & 0.1766 & 482.0095\\
10 & $($0.8690, 0.2852, 0.2752, 0.6895 -0.3976, 0.4957, 0.3383, 0.3411$)^\mathrm{T}$ & 0.0144 & 21.2907\\
11 & $($0.8692, 0.2853, 0.2753, 0.6894 -0.3975, 0.4956, 0.3383, 0.3410$)^\mathrm{T}$ & 0.0029 & 2.4370\\
12 & $($0.8692, 0.2853, 0.2754, 0.6893 -0.3975, 0.4956, 0.3383, 0.3410$)^\mathrm{T}$ & 0.0002 & 0.1396\\
13 & $($0.8692, 0.2853, 0.2754, 0.6892 -0.3975, 0.4956, 0.3383, 0.3410$)^\mathrm{T}$ & 0.0001 & 0.0008\\
14 & $($0.8692, 0.2853, 0.2754, 0.6892 -0.3975, 0.4956, 0.3383, 0.3410$)^\mathrm{T}$ & 0.0000 & 0.0000\\
\noalign{\smallskip}\hline
\end{tabular}}
\end{table}

The second numerical experiment aims to verify Theorem \ref{thm4}.
We first generate a random symmetric nonnegative tensor
$\mathcal{C}\in S(4,10)$ and a random vector ${\bf
z}^*=(0.1040,0.7455,0.7363,0.5619,0.1842,0.5972,0.2999,
0.1341,0.2126,$ $0.8949)^\mathrm{T}\in R^{10}$. All entries of
$\mathcal{C}$ are uniform random numbers in the interval $[0,1]$.
Let ${\bf b}=(\mathcal{C}-\mathcal{I}){{\bf z}^*}^{m-1}$. Since
$\mathcal{D}\in S(4,10)$ is a diagonal tensor whose diagonal
elements are $1$ or $-1$, there are at most $2^{10}=1024$ different
$\mathcal{D}$. The first attempt is to generate all these $1024$
tensors. For each tensor $\mathcal{D}_k$, set
$\mathcal{A}_k=\mathcal{C}\mathcal{D}_k$ (see Definition \ref{def2})
and ${\bf x_k}=(\mathcal{D}_k{\bf z^*}^{m-1})^{\frac{1}{m-1}}$. We
check whether $\mathcal{A}_k{\bf x_k}^{m-1}-|{\bf x_k}|^{m-1}$ is
equals to ${\bf b}$ for all $k\in\{1,2,...,1024\}$. The result shows
that each ${\bf x_k}$ is just one of the solution to the
corresponding TAVE problem $\mathcal{A}_k{\bf x}^{m-1}-|{\bf
x}|^{m-1}={\bf b}_k$.

The second attempt of the second numerical experiment is to generate
five $\mathcal{D}_k$ of all $1024$ tensors randomly and use
Algortihm \ref{alg4.1} to solve the corresponding TAVE. The diagonal
elements of the five $\mathcal{D}_k$ is shown in Table \ref{tab:2}.
\begin{table}[h]
\centering
\caption{Diagonal elements of $\mathcal{D}_{k}$}
\label{tab:2}       
\begin{tabular}{cc}
\hline\noalign{\smallskip}
${\bf k}$  & diag of $\mathcal{D}_k$ \\
\noalign{\smallskip}\hline\noalign{\smallskip}
1 & $($-1,-1,-1,-1,-1,-1,-1,-1,-1,-1$)^\mathrm{T}$ \\
2 & $($-1, 1,-1, 1,-1, 1,-1,-1,-1, 1$)^\mathrm{T}$ \\
3 & $($ 1, 1,-1,-1, 1, 1,-1,-1,-1,-1$)^\mathrm{T}$ \\
4 & $($-1, 1,-1, 1,-1, 1,-1,-1, 1, 1$)^\mathrm{T}$ \\
5 & $($ 1,-1, 1, 1, 1, 1,-1, 1,-1, 1$)^\mathrm{T}$ \\
\noalign{\smallskip}\hline
\end{tabular}
\end{table}

We first select the initial points for Algorithm \ref{alg4.1} by
using normal distribution, i.e., entries are from standardized
normal distribution $N(0,1)$ independently. Here we call these
initial points {\em type-I} initial points. The results of
corresponding TAVE with {\em type-I} initial points is summarized in
Table \ref{tab:3}. We can easily find out that none of the five
${\bf x_k}$ is in the form of $(\mathcal{D}_k{\bf
z^*}^{m-1})^{\frac{1}{m-1}}$. Because Algorithm \ref{alg4.1} is
based on the thoughts of Newton method, thus its convergence relies
heavily on the initial point. In order to detect solution which is
mentioned in Theorem \ref{thm4} by Algorithm \ref{alg4.1}, we should
choose the initial points in another way. For each $\mathcal{D}_k$,
we generate {\em type-II} initial points by adding a random number
chosen from uniform distribution over $(-0.3,0.3)$ to
$(\mathcal{D}_k{\bf z^*}^{m-1})^{\frac{1}{m-1}}$. The results of
corresponding TAVE with {\em type-II} initial points is shown in
Table \ref{tab:4}. The solutions are exactly in the form of
$(\mathcal{D}_k{\bf z^*}^{m-1})^{\frac{1}{m-1}}$.

\begin{table}[h]
\centering
\caption{Numerical results for tensors $\mathcal{A}_k$ with type-I
initial points}
\label{tab:3}       
\begin{tabular}{ccccc}
\hline\noalign{\smallskip}
${\bf k}$  & ${\bf x_k}$ & $\|H({\bf x_k})\|$ & \bf Iter. & \bf Time\\
\noalign{\smallskip}\hline\noalign{\smallskip} \multirow{2}{*}{1}
& $($-0.3485,-0.0971,-0.7753,-1.2447,-0.7739, & \multirow{2}{*}{0.00000012} & \multirow{2}{*}{15} & \multirow{2}{*}{0.2135}\\
& \quad\,-0.5628,-0.4868, 0.4480, 0.2925,-0.9003$)^\mathrm{T}$ & & &\\
\multirow{2}{*}{2}
& $($ 0.4184,-0.0423,-0.2989, 1.0357,-1.0340, & \multirow{2}{*}{0.00000022} & \multirow{2}{*}{15} & \multirow{2}{*}{0.2060}\\
& \quad\, 0.3109,-0.3686,-0.2755,-0.6852, 0.9528$)^\mathrm{T}$ & & &\\
\multirow{2}{*}{3}
& $($ 0.7454, 0.5055,-0.6641, 0.3093,-0.1769, & \multirow{2}{*}{0.00000003} & \multirow{2}{*}{18} & \multirow{2}{*}{0.2673}\\
& \quad\, 1.1273,-0.4514,-1.1430,-0.0619,-0.2421$)^\mathrm{T}$ & & &\\
\multirow{2}{*}{4}
& $($-0.9570, 0.5494,-2.1429,-0.1959,-1.8247, & \multirow{2}{*}{0.00000000} & \multirow{2}{*}{11} & \multirow{2}{*}{0.1355}\\
& \quad\,-0.3996, 0.8803,-0.3457, 0.0458, 0.1694$)^\mathrm{T}$ & & &\\
\multirow{2}{*}{5}
& $($ 0.3385,-1.1498, 1.0413, 0.3533, 0.7606, & \multirow{2}{*}{0.00000006} & \multirow{2}{*}{10} & \multirow{2}{*}{0.1265}\\
& \quad\,-0.1214,-0.3290,-0.0458,-0.2049, 0.4027$)^\mathrm{T}$ & & &\\
\noalign{\smallskip}\hline
\end{tabular}
\end{table}

\begin{table}[h]
\centering
\caption{Numerical results for tensors $\mathcal{A}_k$  with type-II
initial points}
\label{tab:4}       
\begin{tabular}{ccccc}
\hline\noalign{\smallskip}
${\bf k}$  & ${\bf x_k}$ & $\|H({\bf x_k})\|$ & \bf Iter. & \bf Time\\
\noalign{\smallskip}\hline\noalign{\smallskip} \multirow{2}{*}{1}
& $($-0.1040,-0.7455,-0.7363,-0.5619,-0.1842, & \multirow{2}{*}{0.00000072} & \multirow{2}{*}{20} & \multirow{2}{*}{0.2523}\\
& \quad\,-0.5972,-0.2999,-0.1341,-0.2126,-0.8949$)^\mathrm{T}$ & & &\\
\multirow{2}{*}{2}
& $($-0.1040, 0.7455,-0.7363, 0.5619,-0.1842, & \multirow{2}{*}{0.00000090} & \multirow{2}{*}{17} & \multirow{2}{*}{0.2050}\\
& \quad\, 0.5972,-0.2999,-0.1341,-0.2126, 0.8949$)^\mathrm{T}$ & & &\\
\multirow{2}{*}{3}
& $($ 0.1040, 0.7455,-0.7363,-0.5619, 0.1842, & \multirow{2}{*}{0.00000091} & \multirow{2}{*}{24} & \multirow{2}{*}{0.2838}\\
& \quad\, 0.5972,-0.2999,-0.1341,-0.2126,-0.8949$)^\mathrm{T}$ & & &\\
\multirow{2}{*}{4}
& $($-0.1040, 0.7455,-0.7363, 0.5619,-0.1842, & \multirow{2}{*}{0.00000064} & \multirow{2}{*}{16} & \multirow{2}{*}{0.1896}\\
& \quad\, 0.5972,-0.2999,-0.1341, 0.2126, 0.8949$)^\mathrm{T}$ & & &\\
\multirow{2}{*}{5}
& $($ 0.1040,-0.7455, 0.7363, 0.5619, 0.1842, & \multirow{2}{*}{0.00000075} & \multirow{2}{*}{14} & \multirow{2}{*}{0.1638}\\
& \quad\, 0.5972,-0.2999, 0.1341,-0.2126, 0.8949$)^\mathrm{T}$ & & &\\
\noalign{\smallskip}\hline
\end{tabular}
\end{table}

In Tables \ref{tab:3} and \ref{tab:4}, ${\bf k}$ denotes the
experiment No. corresponding to Table \ref{tab:2}. ${\bf x_k}$
denotes the solution vectors returned by Algorithm \ref{alg4.1}.
$\|H({\bf x_k})\|$ denotes the Euclid norm of $H({\bf x_k})$. If the
norm of $H({\bf x_k})$ is small enough, we can regard ${\bf x_k}$ as
an approximate solution of TAVE. {\bf Iter.} denotes the number of
iteration and {\bf Time} denotes the time of iteration that finds
corresponding $x$ by Algorithm \ref{alg4.1}. In the second
experiment, we verify Theorem \ref{thm4} from the instant correctly.
Besides, from Table \ref{tab:3} and \ref{tab:4}, we find that under
the conditions of Theorem \ref{thm4}, the solution $(\mathcal{D}{\bf
z^*}^{m-1})^{\frac{1}{m-1}}$ may not be the only solution of TAVE
$\mathcal{A}{\bf x}^{m-1}-|{\bf x}|^{m-1}={\bf b}$. There might be
some other solutions, such as the solution in Table \ref{tab:3}. To
discuss the uniqueness of the positive solution, we conduct our
third experiment.

Our third numerical experiment focuses on Theorem \ref{thm2}. Here
we first generate a random symmetric nonnegative tensor
$\mathcal{B}$ whose entries are uniform random numbers in the
interval $[0,1]$. Let $c=1+(1+0.01)\max_{1\leq i\leq
n}{(\mathcal{B}e^3)_i}$, where $e=(1,1,1,1)^\mathrm{T}$. Since
$\max_{1\leq i\leq n}{(\mathcal{B}e^3)_i}\geq\rho(\mathcal{B})$, the
choice of $c$ makes sure that $c>\rho(\mathcal{B})+1$. Then let
$\mathcal{A}=c\mathcal{I}-\mathcal{B}$, and $\mathcal{A}$ satisfies
the conditions of Theorem \ref{thm2}, i.e.,
$\mathcal{A}-\mathcal{I}$ is strong M-tensor. Tensors $\mathcal{B}$
and $\mathcal{A}$ are given in Tables \ref{tab:5} and \ref{tab:6},
respectively.
\begin{table}[h]
\centering
\caption{A random symmetric nonnegative tensor
$\mathcal{B}=(b_{i_1i_2i_3i_4})\in S(4,4)$}
\label{tab:5}       
\begin{tabular}{ccccc}
\noalign{\smallskip}\hline\noalign{\smallskip}
$b_{1111}=0.8147$ & $b_{1112}=0.9058$ & $b_{1113}=0.1270$ & $b_{1114}=0.9134$ & $b_{1122}=0.6324$ \\
$b_{1123}=0.0975$ & $b_{1124}=0.2785$ & $b_{1133}=0.5469$ & $b_{1134}=0.9575$ & $b_{1144}=0.9649$ \\
$b_{1222}=0.1576$ & $b_{1223}=0.9706$ & $b_{1224}=0.9572$ & $b_{1233}=0.4854$ & $b_{1234}=0.8003$ \\
$b_{1244}=0.1419$ & $b_{1333}=0.4218$ & $b_{1334}=0.9157$ & $b_{1344}=0.7922$ & $b_{1444}=0.9595$ \\
$b_{2222}=0.6557$ & $b_{2223}=0.0357$ & $b_{2224}=0.8491$ & $b_{2233}=0.9340$ & $b_{2234}=0.6787$ \\
$b_{2244}=0.7577$ & $b_{2333}=0.7431$ & $b_{2334}=0.3922$ & $b_{2344}=0.6555$ & $b_{2444}=0.1712$ \\
$b_{3333}=0.7060$ & $b_{3334}=0.0318$ & $b_{3344}=0.2769$ & $b_{3444}=0.0462$ & $b_{4444}=0.0971$ \\
\noalign{\smallskip}\hline
\end{tabular}
\end{table}

\begin{table}[h]
\centering
 \caption{The symmetric tensor
$\mathcal{A}=(a_{i_1i_2i_3i_4})\in S(4,4)$ based on $\mathcal{B}$}
\label{tab:6}
\scalebox{0.9}{
\begin{tabular}{ccccc}
\noalign{\smallskip}\hline\noalign{\smallskip}
$a_{1111}=\,\,40.8037$ & $a_{1112}=-0.9058$ & $a_{1113}=-0.1270$ & $a_{1114}=-0.9134$ & $a_{1122}=-0.6324$ \\
$a_{1123}=-0.0975$ & $a_{1124}=-0.2785$ & $a_{1133}=-0.5469$ & $a_{1134}=-0.9575$ & $a_{1144}=-0.9649$ \\
$a_{1222}=-0.1576$ & $a_{1223}=-0.9706$ & $a_{1224}=-0.9572$ & $a_{1233}=-0.4854$ & $a_{1234}=-0.8003$ \\
$a_{1244}=-0.1419$ & $a_{1333}=-0.4218$ & $a_{1334}=-0.9157$ & $a_{1344}=-0.7922$ & $a_{1444}=-0.9595$ \\
$a_{2222}=\,\,40.9627$ & $a_{2223}=-0.0357$ & $a_{2224}=-0.8491$ & $a_{2233}=-0.9340$ & $a_{2234}=-0.6787$ \\
$a_{2244}=-0.7577$ & $a_{2333}=-0.7431$ & $a_{2334}=-0.3922$ & $a_{2344}=-0.6555$ & $a_{2444}=-0.1712$ \\
$a_{3333}=\,\,40.9124$ & $a_{3334}=-0.0318$ & $a_{3344}=-0.2769$ & $a_{3444}=-0.0462$ & $a_{4444}=\,\,41.5213$\\
\noalign{\smallskip}\hline
\end{tabular}}
\end{table}

We choose $10$ random positive vectors ${\bf b}_k\in
R_+^4,k=1,\ldots,10$. For each ${\bf b}_k$, we find $20$ repeatable
solutions of TAVE: $\mathcal{A}{\bf x}^{m-1}-|{\bf x}|^{m-1}={\bf
b}_k$ with random vectors from $N(0,1)^4$ as initial points
repeatedly and summarize the results in Table \ref{tab:7}.

\begin{table}[h]
\centering \caption{Numerical results for the third experiment}
\label{tab:7}
\scalebox{0.78}{
\begin{tabular}{cccccc}
\hline\noalign{\smallskip}
${\bf x}$ & ${\bf b}$ & {\bf Iter.} & {\bf Time} & $\max{\|H({\bf x})\|}$ & {\bf Attempts}\\
\noalign{\smallskip}\hline\noalign{\smallskip}
$(0.8100,0.7881,0.7786,0.8003)^\mathrm{T}$ & $(1.4193,0.2916,0.1978,1.5877)^\mathrm{T}$ & 31.00 & 0.6783 & 0.00000098 & 20/100\\
$(0.7285,0.7212,0.7156,0.7098)^\mathrm{T}$ & $(0.8045,0.6966,0.8351,0.2437)^\mathrm{T}$ & 19.40 & 0.3109 & 0.00000099 & 20/157\\
$(0.7219,0.7313,0.7230,0.7098)^\mathrm{T}$ & $(0.2157,1.1658,1.1480,0.1049)^\mathrm{T}$ & 19.55 & 0.3456 & 0.00000099 & 20/205\\
$(0.8453,0.8603,0.8294,0.8276)^\mathrm{T}$ & $(0.7223,2.5855,0.6669,0.1873)^\mathrm{T}$ & 13.65 & 0.1907 & 0.00000082 & 20/244\\
$(0.8445,0.8584,0.8321,0.8507)^\mathrm{T}$ & $(0.0825,1.9330,0.4390,1.7947)^\mathrm{T}$ & 14.05 & 0.2168 & 0.00000084 & 20/290\\
$(0.7104,0.7055,0.6849,0.6957)^\mathrm{T}$ & $(0.8404,0.8880,0.1001,0.5445)^\mathrm{T}$ & 68.25 & 1.7492 & 0.00000051 & 20/145\\
$(0.6775,0.6771,0.6677,0.6750)^\mathrm{T}$ & $(0.3035,0.6003,0.4900,0.7394)^\mathrm{T}$ & 21.75 & 0.3864 & 0.00000099 & 20/216\\
$(0.9021,0.8787,0.8894,0.8805)^\mathrm{T}$ & $(1.7119,0.1941,2.1384,0.8396)^\mathrm{T}$ & 15.70 & 0.2535 & 0.00000089 & 20/114\\
$(0.8104,0.8007,0.7908,0.7841)^\mathrm{T}$ & $(1.3546,1.0722,0.9610,0.1240)^\mathrm{T}$ & 14.60 & 0.2121 & 0.00000071 & 20/129\\
$(0.8957,0.8939,0.8661,0.8808)^\mathrm{T}$ & $(1.4367,1.9609,0.1977,1.2078)^\mathrm{T}$ & 13.60 & 0.1980 & 0.00000099 & 20/114\\
\noalign{\smallskip}\hline
\end{tabular}}
\end{table}

In Table \ref{tab:7}, ${\bf x}$ denotes the solution of TAVE. ${\bf
b}$ denotes random generated ${\bf b}_k$ mentioned above. {\bf
Iter.} denotes the average number of iteration that finds the
corresponding solution successfully. {\bf Time} denotes the average
time of iteration that finds the corresponding solution by Algorithm
1. $\max{\|H({\bf x})\|}$ is the maximum norm of all $H({\bf x})$
returned by Algorithm 1 whose ${\bf x}$ is the corresponding
solution. {\bf Attempts} has the form {\bf N/T}, {\bf N} denotes the
number of the corresponding ${\bf x}$ found by Algorithm
\ref{alg4.1} and {\bf T} denotes the number of initial points in
all.

In this experiment, we use ``while'' loop in Matlab program to
guarantee that we can get exact $20$ solutions (might be repeatable)
for each $\bf b_k\geq 0$. According to Table \ref{tab:7}, for each
$\bf b_k\geq 0$, Algorithm \ref{alg4.1} only returns unique positive
solution in all $20$ repeatable solutions. This phenomenon fits
Theorem \ref{thm2} very well. Besides, in order to get $20$ valid
solutions, the initial points we attempt is $10$ times more than the
valid ones. This means that most of the random initial points fail
to find a solution by Algorithm \ref{alg4.1}. The reason might be
that the convergence of Newton type method depends on the initial
point badly. Theorem \ref{thm2} shows that under this circumstances,
there's only one unique positive solution of TAVE. If and only if
the initial point is in the convergence region of some solution of
TAVE, the algorithm will converge. Therefore, it's harder to find
valid solutions if $\bf b\geq 0$. Table \ref{tab:8} shows the
solutions found by Algorithm \ref{alg4.1} when ${\bf b}=(-1,1,1,1)$.
The initial points attempted in all is much less.

\begin{table}[h]
\centering \caption{Solutions of TAVE when ${\bf b}=(-1,1,1,1)$}
\label{tab:8}
\scalebox{0.88}{
\begin{tabular}{cccccc}
\hline\noalign{\smallskip}
${\bf x}$ & ${\bf b}$ & {\bf Iter.} & {\bf Time} & $\max{\|H({\bf x})\|}$ & {\bf Attempts}\\
\noalign{\smallskip}\hline\noalign{\smallskip}
$(\,\,\,\,\, 0.0800,0.3629,0.3543,0.3505)^\mathrm{T}$ & $(-1,1,1,1)^\mathrm{T}$ & 12.67 & 0.1644 & 0.00000070 & \,\,\,3/20\\
$(-0.2593,0.2948,0.2891,0.2903)^\mathrm{T}$ & $(-1,1,1,1)^\mathrm{T}$ & 13.67 & 0.1708 & 0.00000099 & \,\,\,3/20\\
$(\,\,\,\,\, 0.6258,0.6600,0.6522,0.6537)^\mathrm{T}$ & $(-1,1,1,1)^\mathrm{T}$ & 11.93 & 0.1516 & 0.00000075 & 14/20\\
\noalign{\smallskip}\hline
\end{tabular}}
\end{table}

Moreover, under the circumstances that $\bf b\geq 0$ and
$\mathcal{A}-\mathcal{I}$ is strong M-tensor, whether the unique
positive solution of TAVE is the unique solution of TAVE remains a
question. In our experiment we haven't found other solutions except
for the unique positive ones.

\section{Conclusion}
We have introduced tensor absolute value equations. The simple
definition is a natural generalization of the definition of absolute
value equations in the matrix case. We have established some basic
properties for tensor absolute value equations and we reformulate
tensor absolute value equations  as a generalized tensor
complementarity problem. We have proposed some sufficient conditions
for the existence of solution to the multilinear equations. We
propose an inexact Levenberg-Marquardt-type method (Algorithm
\ref{alg4.1}) to solve the tensor absolute value equations and some
numerical results have shown that our algorithm is performing well.

There are some questions which are still in study. For example, we
known that ``The AVE (\ref{ave}) is uniquely solvable for any ${\bf
b}\in R^n$ if the singular values of $A$ exceed $1$''
\cite{mangas3}. Can we extend the conclusion to TVAE (\ref{TAVE}),
i.e.,  the statement ``The TAVE (\ref{TAVE}) is uniquely solvable
for any ${\bf b}\in R^n$ if the singular values of tensor
$\mathcal{A}$ exceed $1$'' is correct or not? This is still an open
question.

\section*{Acknowledgments}

Shouqiang Du's work was supported by the National Natural Science
Foundation of China (Grant No. 11671220, 11401331) and the Nature
Science Foundation of Shandong Province (ZR2015AQ013, ZR2016AM29).
Liping Zhang's  work was supported by the National Natural Science
Foundation of China (Grant No. 11271221). Liqun Qi's work was
supported by the Hong Kong Research Grant Council (Grant No. PolyU
501212, 501913, 15302114 and 15300715).

\end{document}